\newcommand{\R}[0]{\mathbb{R}}
\newtheorem{thm}{Theorem}[section]
\newtheorem{prop}[thm]{Proposition}
\newtheorem{lem}[thm]{Lemma}
\newtheorem{conj}[thm]{Conjecture}
\newtheorem*{defn*}{Definition}
\newtheorem{rem}[thm]{Remark}
\let\emptyset\varnothing
\numberwithin{equation}{section}
\patchcmd{\@settitle}{\uppercasenonmath\@title}{}{}{}
\patchcmd{\@setauthors}{\MakeUppercase}{}{}{}
\patchcmd{\section}{\scshape}{}{}{}
\title[Nonlinear Schr\"odinger equations on compact product manifolds]{Local well-posedness for nonlinear Schr\"odinger equations on compact product manifolds}
\author{Yunfeng Zhang}
\subjclass[2020]{
35P20,
35Q55, 
58J50.
} 
\keywords{Nonlinear Schr\"odinger equation, multi-linear Strichartz estimate, multi-linear joint spectral projector estimate, anisotropic Strichartz estimate}
\begin{document}

\onehalfspacing

\begin{abstract}
We prove new local well-posedness results for nonlinear Schr\"odinger equations posed on a general product of spheres and tori, by the standard approach of multi-linear Strichartz estimates. To prove these estimates, we establish multi-linear bounds for the joint spectral projector associated to the Laplace--Beltrami operators on the individual sphere factors of the product manifold. To treat the particular case of the cubic NLS on a product of two spheres at critical regularity, we prove a sharp $L^\infty_xL^p_t$ estimate of the solution to the linear Schr\"odinger equation on the two-torus. 
\end{abstract}

\maketitle

{\let\thefootnote\relax\footnotetext{Y. Zhang: Department of Mathematical Sciences, University of Cincinnati, Cincinnati, OH 45221-0025, USA.\\ email: yunfengzhang108@gmail.com}}

\section{Introduction}

The goal of this paper is to provide new results of local well-posedness for nonlinear Schr\"odinger equations (NLS) posed on a general product of spheres and tori, complementing the results obtained by us in \cite{Zha21} for the cubic NLS, and generalizing them to NLS of all algebraic nonlinearities. 
Let $M$ first be a compact Riemannian manifold of dimension $d$ equipped with the Laplace--Beltrami operator $\Delta$. The nonlinear Schr\"odinger equation of algebraic nonlinearity posed on $M$ reads 
\begin{align}\label{NLS}
i\partial_t u+\Delta u = \pm |u|^{2k}u,\tag{NLS}
\end{align}
where $k$ is a positive integer, and $u=u(t,x)$ is a function of time $t\in \R$ and space $x\in M$. Compared with the standard model where the underlying manifold $M$ is a Euclidean space, NLS posed on a compact Riemannian manifold has a much richer geometric flavor and has attracted a lot of attention \cite{BP11, BGT02, BGT04, BGT05, BGT052, Bou93, BD15, CCT03, GOW14, Her13, HTT11, HTT14, HS15, KV16, SZ25, Wan13, Yan15, Zha21, Zha23, Zha16}. One is usually interested in answering well-posedness questions for initial data lying in an $L^2$-based Sobolev space $H^{s}$ and a natural question is to understand the optimal range of $s$ for which the NLS is well-posed. Pretending $M$ to be a Euclidean space and considering scaling, the critical regularity for \eqref{NLS} is understood to be
$$s_c=\frac{d}2-\frac{1}{k},$$
serving as a guideline for the threshold of well-posedness for NLS on arbitrary manifolds\footnote{Of course, this $s_c$ may not be the actual threshold above which (uniform) well-posedness holds, and below which
(uniform) well-posedness fails. A notable example is the cubic NLS on $\mathbb{S}^2$, for which the threshold regularity for uniform well-posedness is $s=\frac14$ instead of $s_c=0$ \cite{BGT05}. }. 

The choice of products of spheres and tori as the underlying manifolds is motivated not only by their role as the next natural class beyond the well-studied spheres \cite{BGT04, BGT052, Her13} and tori \cite{HTT11, BD15, KV16} for investigating nonlinear Schr\"odinger dynamics on compact manifolds, but also by the fact that they give rise to genuinely new challenges and important open problems. First, the product models require both a synthesis of, and new insights into, existing theories for NLS on spheres and tori. For example, they motivate the study of new multi-linear estimates for the eigenfunctions of the Laplace--Beltrami operator, such as in Theorem \ref{mljspe} below and the recent development on the three-sphere \cite{DZZ25}, and new estimates in Fourier analysis and number theory, such as the anisotropic Strichartz estimates on tori (Conjecture~\ref{rs}). Second, the product models include a lot of physically meaningful cases, such as the energy-critical cubic NLS equations on $\mathbb{S}^2 \times \mathbb{S}^2$ and $\mathbb{S}^3 \times \mathbb{T}$, which still lack a resolution of the critical well-posedness problem (see Remarks~\ref{rem: S2S2} and~\ref{rem: S3T}).
Furthermore, such product spaces provide the simplest prototypes of higher-rank compact symmetric spaces and thus constitute a natural testing ground before addressing NLS on general higher-rank settings, where more sophisticated tools from algebra and microlocal analysis would become indispensable. 


Let us now state the main contributions of this paper. Throughout the paper, we will use $A\lesssim B$ to mean $A \leq CB$ for some
positive constant $C$, and $A\sim B$ to mean $A\lesssim B$ and $B\lesssim A$.

\begin{thm}[Multi-linear Strichartz estimate]\label{mls}
Let $M$ be a product of spheres and tori: $M=\mathbb{S}^{d_1}\times\mathbb{S}^{d_2}\times\cdots\times \mathbb{S}^{d_{r_0}}\times \mathbb{T}^{r_1}$, with $d_i\geq 2$ ($i=1,2,\ldots,r_0$) and $r:=r_0+r_1\geq 2$. Let $r_2$ (respectively, $r_3$) be the number of 2-sphere (respectively, 3-sphere) factors in this product. 
Let $f^j\in L^2(M)$ be spectrally localized to the window $[N_j,2N_j]$ with respect to $\sqrt{-\Delta}$, that is, 
$\mathbbm{1}_{[N_j,2N_j]}(\sqrt{-\Delta})f^j=f^j$, $j=1,2,\ldots,k+1$. Let the spectral parameters be ordered such that $N_1\geq N_2\geq\cdots\geq N_{k+1}\geq 1$. Let $I$ be a fixed time interval. Then: 
\\    
(i) For $k=1$, $r\geq 3$, there exists $\delta>0$ such that 
$$\|e^{it\Delta}f^1\ e^{it\Delta}f^2\|_{L^2(I\times M)}\lesssim \left(\frac{N_2}{N_1}+\frac1N_2\right)^{\delta}N_2^{\frac{d}2-1+\frac{r_2}{4}}(\log N_2)^{\frac{r_3}{2}}\|f^1\|_{L^2(M)}\|f^2\|_{L^2(M)}.$$
(ii) For $k=1$, $r=2$, we have for all $\varepsilon>0$ 
$$\|e^{it\Delta}f^1\ e^{it\Delta}f^2\|_{L^2(I\times M)}\lesssim  N_2^{\frac{d}2-1+\frac{r_2}{4}+\varepsilon}(\log N_2)^{\frac{r_3}{2}}\|f^1\|_{L^2(M)}\|f^2\|_{L^2(M)}.$$
(iii) For $k=1$ and the special case $M=\mathbb{S}^{d_1}\times\mathbb{S}^{d_2}$, $d_1,d_2\geq 4$, 
there exists $\delta >0$ such that
$$\|e^{it\Delta}f^1\ e^{it\Delta}f^2\|_{L^2(I\times M)}\lesssim \left(\frac{N_2}{N_1}+\frac1{N_2}\right)^\delta N_2^{\frac{d}2-1}\|f^1\|_{L^2(M)}\|f^2\|_{L^2(M)}.$$
(iv) For $k\geq 2$, $r\geq 3$, there exists $\delta_0>0$ such that for all $\delta\in [0,\delta_0)$ and $\eta>0$
\begin{align*}
\left\|\prod_{j=1}^{k+1}e^{it\Delta}f^j\right\|_{L^2(I\times M)}&\lesssim \left(\frac{N_{k+1}}{N_1}+\frac{1}{N_2}\right)^{\delta}
N_2^{\frac{d}{2}-1+\frac{r_2}4+r_3\eta+\delta(k-1)}N_3^{\frac{d}{2}-\frac{r_2}4-r_3\eta-\delta}\\
&\ \ \ \ \ \ \cdot \prod_{j=4}^{k+1}N_j^{\frac{d}{2}-\delta}\prod_{j=1}^{k+1}\|f^j\|_{L^2(M)}.
\end{align*}
(v) For $k\geq 2$, $r=2$, there exists $\delta_0>0$ such that for all $\delta\in [0,\delta_0)$, $\eta>0$ and $\varepsilon>0$
\begin{align*}
\left\|\prod_{j=1}^{k+1}e^{it\Delta}f^j\right\|_{L^2(I\times M)}&\lesssim \left(\frac{N_{k+1}}{N_1}+\frac1{N_2}\right)^{\delta}
N_2^{\frac{d}{2}-1+\frac{r_2}4+r_3\eta+\varepsilon+\delta(k-1)}\\
&\ \ \ \ \ \ \cdot N_3^{\frac{d}{2}-\frac{r_2}4-r_3\eta-\varepsilon-\delta}\prod_{j=4}^{k+1}N_j^{\frac{d}{2}-\delta}\prod_{j=1}^{k+1}\|f^j\|_{L^2(M)}.
\end{align*}
\end{thm}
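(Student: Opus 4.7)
Since the Laplacian on $M$ is a sum of Laplacians on the factors, $e^{it\Delta}$ factors as a tensor product, and the joint spectral decomposition gives
$$e^{it\Delta}f^j=\sum_{\vec n,\,m} e^{-it\lambda(\vec n,m)}\,\Pi_{\vec n}\widehat{f^j}(\vec n,m)\,e^{im\cdot y},\qquad \lambda(\vec n,m)=\sum_{i=1}^{r_0} n_i(n_i+d_i-1)+|m|^2,$$
where $\Pi_{\vec n}=\Pi_{n_1}^{(1)}\cdots\Pi_{n_{r_0}}^{(r_0)}$ is the tensor product of the spherical harmonic projectors on the factors $\mathbb{S}^{d_i}$. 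The plan is to reduce the $L^2(I\times M)$ norms to two inputs: (a) \emph{multilinear joint spectral projector bounds} on the product of spheres, and (b) a bilinear \emph{arithmetic/transversality count} of configurations $(\vec n^\ell,m^\ell)$ compatible with prescribed space-time frequency sums.

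\textbf{Bilinear case (i)--(iii).} After inserting a smooth time cutoff on $I$ and applying Plancherel in $t$ and in the torus variable,
$$\|e^{it\Delta}f^1\cdot e^{it\Delta}f^2\|_{L^2(I\times M)}^2 \lesssim \sum_{\tau,\xi}\Bigl\|\sum_{\substack{\lambda(\vec n^1,m^1)+\lambda(\vec n^2,m^2)=\tau\\ m^1+m^2=\xi}}\Pi_{\vec n^1}f^1_{m^1}\cdot \Pi_{\vec n^2}f^2_{m^2}\Bigr\|_{L^2(\mathbb{S}^{d_1}\times\cdots\times\mathbb{S}^{d_{r_0}})}^2.$$
Input (a) bounds the inner $L^2$ norm, yielding the factor $N_2^{r_2/4}(\log N_2)^{r_3/2}$ from two- and three-sphere contributions: on $\mathbb{S}^2$ the operator norm $\|\Pi_n\|_{L^2\to L^4}\sim n^{1/4}$ (Sogge) is unavoidable, and on $\mathbb{S}^3$ the integer-valued spectrum $(n+1)^2-1$ forces a logarithmic loss through a divisor estimate. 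Input (b) is a level-set count for $\lambda$ on the dyadic shell $|\vec n|^2+|m|^2\sim N_j^2$; transversality between the two shells produces the gain $(N_2/N_1+1/N_2)^\delta$ when $r\geq 3$, while in the endpoint case $r=2$ a full dimension of transversality is lost and one pays an $\varepsilon$-factor as in (ii). Part (iii) bypasses the $N_2^{r_2/4}$ loss because for $d_i\geq 4$ the bilinear joint-projector estimate $\|\Pi_{n_1}^{(1)}\Pi_{n_2}^{(2)}g^1\cdot \Pi_{n_1'}^{(1)}\Pi_{n_2'}^{(2)}g^2\|_{L^2}$ is sharp without the $n^{1/4}$ loss.

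\textbf{Multilinear case (iv)--(v) and main obstacle.} I bootstrap from the bilinear results by pairing the highest- and lowest-frequency factors; the appearance of $(N_{k+1}/N_1+1/N_2)^\delta$ in the conclusion points unambiguously to applying (i) or (iii) to $(u^1,u^{k+1})$, after which H\"older gives
$$\Bigl\|\prod_{j=1}^{k+1}u^j\Bigr\|_{L^2_{t,x}}\leq \|u^1u^{k+1}\|_{L^2_{t,x}}\prod_{j=2}^k\|u^j\|_{L^\infty_{t,x}}.$$
Plain Bernstein $\|u^j\|_{L^\infty_{t,x}}\lesssim N_j^{d/2}\|f^j\|_{L^2}$ is too crude to produce the stated exponents, so I would interpolate with refined single-function $L^q_{t,x}$ Strichartz estimates on the product manifold that carry a small negative power of the relevant $N_j$; iterating this interpolation accumulates the $+\delta(k-1)$ on $N_2$ and the $-\delta$ on each $N_j$, $j\geq 3$, and redistributes the $r_2/4$ and $r_3\eta$ losses from $N_{k+1}$ onto the pair $(N_2,N_3)$ in the sharper form stated. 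The hardest step will be input (a): establishing the multilinear joint spectral projector bound that simultaneously controls the two-sphere, three-sphere, higher-sphere, and torus contributions at the sharp level required by (iii) calls for a Whitney-type dyadic decomposition of the frequency configurations and a careful bookkeeping of losses across the different factors, and it is here that one must be most inventive to retain both the transversality gain and the spectral projector sharpness simultaneously.
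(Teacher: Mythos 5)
Your bilinear skeleton (Plancherel in $(t,x_1)$, then a multilinear \emph{joint} spectral projector bound on the sphere factors, then an orthogonality/counting input in the time--torus variables) is in the right spirit and matches the paper's architecture for (i)--(ii). But there are genuine gaps in how you propose to execute (iii)--(v). First, the multilinear cases: the paper does \emph{not} bootstrap from the bilinear estimate by pairing $u^1$ with $u^{k+1}$. It applies the full $(k+1)$-linear joint spectral projector estimate (Theorem \ref{mljspe}) to the joint eigenfunction components $f^j_{\xi^j}$, which is precisely what places the main gain $\frac{d_0}{2}-r_0+\frac{r_2}{4}$ on $N_2$ and the secondary gain on $N_3$; the leftover time--torus exponential sums are then handled by H\"older with $L^4\times L^4\times L^\infty\times\cdots\times L^\infty$ (or $L^{p_1}\times L^{p_2}\times L^{p_3}\times L^\infty\cdots$ when $r=2$), the $L^4$ bounds coming from torus Strichartz/decoupling and the $L^\infty$ bounds from Cauchy--Schwarz. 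Your H\"older splitting $\|u^1u^{k+1}\|_{L^2}\prod_{j=2}^k\|u^j\|_{L^\infty}$ puts the dispersive gain on the \emph{smallest} frequency $N_{k+1}$, and no interpolation with single-function Strichartz estimates on the middle factors can transfer it to $N_2$ as required by (iv)--(v); the exponents you would obtain are strictly weaker. Relatedly, the factor $\left(\frac{N_{k+1}}{N_1}+\frac{1}{N_2}\right)^{\delta}$ does not signal a high--low pairing: it is an algebraic rewriting of $\left(\frac{N_2}{N_1}+\frac{1}{N_2}\right)^{\delta_0}$ (absorbing the difference into the $N_2^{\delta(k-1)}\prod_{j\ge3}N_j^{-\delta}$ factors), and that factor in turn comes from the Herr--Tataru--Tzvetkov two-step localization of $f^1$ into cubes of side $N_2$ and then slabs of thickness $\max\{N_2^2/N_1,1\}$, with the gain obtained by interpolating the slab exponential sum between its trivial $L^\infty$ bound and the cube estimate.

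Second, case (iii): the sharpness of the joint projector bound for $d_1,d_2\ge4$ only removes the $N_2^{r_2/4}$ loss; it does nothing about the $\varepsilon$ in (ii), which originates in the time--torus step. For $r=2$ the decoupling range is $p>\frac{2(r+2)}{r}=4$, which just misses the $L^4_t$ needed after H\"older, and this is exactly why (ii) carries an $\varepsilon$ and no $\delta$-gain. The entire content of (iii) is the new exponential sum estimate of Lemma \ref{exp}: for $r=r_0=2$ (time variable only, no torus factors) one has $\|\sum_{\xi\in{\bf J}}a_\xi e^{-it|\xi|^2}\|_{L^p_t}\lesssim N^{\frac r2-\frac2p}\|a_\xi\|_{l^2}$ for all $p>2$, proved via a lattice-point count on circles (at most $O(N^\varepsilon)$, indeed $O(1)$ for large radius, points of a circle in an $N$-cube) combined with Bourgain's $\Lambda(p)$-set level-set argument. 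Your proposal contains no substitute for this ingredient. A minor further point: the $(\log N_2)^{r_3/2}$ loss on $\mathbb{S}^3$ factors arises from the oscillatory-integral bound $\Lambda(3,\lambda)=\lambda^{1/2}(\log\lambda)^{1/2}$ for the $L^2_tL^\infty_z$ mapping of the H\"ormander parametrix, not from a divisor estimate on the spectrum.
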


As a corollary, we have the following new local well-posedness results. 

\begin{thm}[Main result]\label{lwp}
Let $M=\mathbb{S}^{d_1}\times\mathbb{S}^{d_2}\times\cdots\times \mathbb{S}^{d_{r_0}}\times \mathbb{T}^{r_1}$, $r=r_0+r_1\geq 2$, $d_i\geq 2$, $i=1,2,\ldots,r_0$. Let $r_2$ (respectively, $r_3$) be the number of 2-sphere (respectively, 3-sphere) factors in this product. Then the \eqref{NLS} is locally well-posed with initial data $u(0,x)\in H^s$ for:
\\
(1) $s\geq s_c$ (critical), when:
    \begin{itemize}

    \item  $r_2=0,1$, $k\geq 2$ (with the case of the quintic NLS on $\mathbb{S}^2\times\mathbb{T}^1$ already treated in \cite{HS15});    
    \item  $M=\mathbb{S}^{d_1}\times\mathbb{S}^{d_2}$, $d_1,d_2\geq 4$, $k\geq 1$;   
    
     \item  $r_2=2$, $k\geq 3$;   
      \item  $r_2=3$, $k\geq 5$;       
      
    \end{itemize}
(2) $s>s_c$ (almost critical), when: 
\begin{itemize}

\item  $r_2=2$, $r=2,3$, $k= 2$;

    \end{itemize}
(3) $s>s_0>s_c$ (sub-critical), when: 
\begin{itemize}
    
    \item  $r_2=1$, $r\leq 11$, $k= 1$, $s_0=\frac{d}2-\frac{3}4$.    
    \end{itemize}

\end{thm}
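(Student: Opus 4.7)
The strategy is to deduce all three items from Theorem~\ref{mls} via the standard contraction-mapping scheme in function spaces adapted to the Schr\"odinger flow: the atomic spaces $X^s = \ell^2_N H^s U^2_\Delta$ (as in \cite{HTT11,HTT14}) for the critical statements in item~(1), and the classical $X^{s,b}$ spaces with $b > \tfrac12$ for the sub-critical item~(3). Local well-posedness in $H^s(M)$ is then reduced to the multi-linear Duhamel estimate
\begin{equation*}
\Bigl\|\int_0^t e^{i(t-\tau)\Delta}\bigl(|u|^{2k}u\bigr)(\tau)\,d\tau\Bigr\|_{X^s(I)} \lesssim \prod_{j=1}^{2k+1}\|u_j\|_{X^s(I)},
\end{equation*}
which, dualizing against the pre-dual space $Y^{-s}$, becomes a $(2k+2)$-linear space-time integral estimate. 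After Littlewood--Paley decomposition of all $2k+2$ factors and ordering the dyadic frequencies $N_0\geq N_1\geq\cdots\geq N_{2k+1}$ (with $N_0\sim N_1$ by orthogonality), I would split them into two groups of $k+1$, apply Cauchy--Schwarz, and invoke the $(k+1)$-linear Strichartz bound of Theorem~\ref{mls} on each group, after lifting it from linear solutions to $U^2_\Delta$-atoms via the transference principle.

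The heart of the argument is then a summability check on the resulting dyadic series. At the critical regularity $s=s_c=\frac{d}2-\frac1k$, substituting Theorem~\ref{mls}(iv) and normalizing by $N_j^{s_c}$ on each factor produces a summand of the schematic form
\begin{equation*}
\Bigl(\tfrac{N_{k+1}}{N_1}+\tfrac{1}{N_2}\Bigr)^\delta\, N_2^{-\frac1k+\frac{r_2}4+r_3\eta+\delta(k-1)}\, N_3^{\frac1k-\frac{r_2}4-r_3\eta-\delta}\, \prod_{j=4}^{k+1}N_j^{\frac1k-\delta},
\end{equation*}
so convergence in the lower scales demands strict negativity of these exponents, which translates exactly into the thresholds $k\geq 2$ for $r_2\leq 1$, $k\geq 3$ for $r_2=2$, and $k\geq 5$ for $r_2=3$ in item~(1); the two highest scales $N_0\sim N_1$ are then summed using the off-diagonal gain factor $(N_{k+1}/N_1+1/N_2)^\delta$. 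For $M=\mathbb{S}^{d_1}\times\mathbb{S}^{d_2}$ with $d_1,d_2\geq 4$, the sharper bound of Theorem~\ref{mls}(iii), which carries no $r_2/4$ loss, yields scale invariance directly and covers the $k=1$ endpoint.

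For item~(2) with $r_2=2$, $k=2$, the $\varepsilon$- or $r_3\eta$-loss in Theorem~\ref{mls}(iv)--(v) makes the borderline $N_3$-exponent non-strict, forcing $s>s_c$. For item~(3), the bilinear bound of Theorem~\ref{mls}(i) with $r_2=1$ has leading factor $N_2^{d/2-3/4}$, and standard cubic $X^{s,b}$ bookkeeping then forces $s>\frac{d}{2}-\frac{3}{4}=s_0$, with the restriction $r\leq 11$ inherited as a technical threshold from the bilinear summation. The main obstacle is this case-by-case bookkeeping at criticality: one must verify that the $\delta$-gain dominates every logarithmic divergence both from the two-highest-frequency summation and from the $(\log N_2)^{r_3/2}$ loss in Theorem~\ref{mls}(iv). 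The boundary cases $r_2=2,\,k=2$ and $r_2=3,\,k=4$ sit exactly on the threshold where strict negativity of the $N_3$-exponent fails, explaining why they appear only as almost-critical in item~(2) or are excluded from item~(1) altogether.
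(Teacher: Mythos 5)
Your overall route is the same as the paper's: the paper reduces Theorem \ref{lwp} to the uniform-exponent form of the multilinear Strichartz estimate extracted from Theorem \ref{mls}, and then cites the $U^2/V^2$ machinery of \cite{HS15} for the critical cases and the $X^{s,b}$ scheme of \cite{BGT05} for the (almost/sub)critical ones; you simply spell out that delegated machinery. Three points in your bookkeeping need correction, though none is fatal. First, after dividing Theorem \ref{mls}(iv) by $\prod_{j\geq 2}N_j^{s_c}$ the $N_2$-exponent is $-1+\frac1k+\frac{r_2}{4}+r_3\eta+\delta(k-1)$, not $-\frac1k+\frac{r_2}{4}+\cdots$ as you wrote; with your stated exponent the condition $\frac{r_2}{4}<\frac1k$ would give entirely different (wrong) thresholds, whereas the correct condition $\frac{r_2}{4}<1-\frac1k$ is what produces $k\geq 2$ for $r_2\leq 1$, $k\geq 3$ for $r_2=2$, and $k\geq 5$ for $r_2=3$. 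Second, and relatedly, the obstruction in item (2) ($r_2=2$, $k=2$) is the $N_2$-exponent, which equals $s_c$ plus the unavoidable positive losses $r_3\eta+\delta+\varepsilon$, not the $N_3$-exponent; the cumulative exponent test is passed at $N_3$ and beyond. Third, the restriction $r\leq 11$ in item (3) is not a technical threshold of the bilinear summation: the argument yields $s>\frac d2-\frac34$ for every $r\geq 2$ with $r_2=1$, and the theorem restricts to $r\leq 11$ only because for $r\geq 12$ the previously known range $s>\frac d2-\frac{r}{r+4}$ from \cite{Zha21} is already at least as good, so the statement is recorded only where it is new. With these corrections your summability analysis matches the paper's.
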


\begin{rem}
In the above theorem, we only included results with the range of regularities that has not been reached by previous literature. We also did not include any results treatable by currently known linear Strichartz estimates, which however will be discussed and summarized in the Appendix. 
For a summary of current status of local well-posedness of \eqref{NLS} on compact manifolds, see Table \ref{lit}. 
\end{rem}

\begin{rem}\label{rem: S2S2}
The sub-critical ranges listed in Table \ref{lit}, including those in (3) of Theorem~\ref{lwp}, 
as well as the multi-linear Strichartz estimates in Theorem \ref{mls}, are not expected to be sharp in general. We currently do not yet have a good understanding of NLS on product manifolds with 2-sphere factors. 
As an example, for the cubic NLS on $\mathbb{S}^2\times\mathbb{S}^2$, our approach gives well-posedness for $s\geq \frac32$, but there is reason to expect that well-posedness may hold at the critical regularity $s_c=1$. This optimism is mainly motivated by the recent work \cite{HS25} of Huang--Sogge, where they successfully established sharp (up to $\varepsilon$-losses) Strichartz estimates on $\mathbb{S}^2$. This provides evidence that scale-invariant $L^p$-Strichartz estimate may in fact hold on $\mathbb{S}^2\times\mathbb{S}^2$ for some $p<4$; see the discussion in the final section of \cite{DZZ25}. 

\end{rem}

\begin{center}

\begin{table}
\small
{\renewcommand{\arraystretch}{1.2}
\begin{tabular}{ccc}
\hline  
\begin{tabular}{cc}
Compact manifold 
of dimension $d$
\end{tabular}& \vline & Known local well-posedness\\
\hline  
General & \vline & $s>\frac d2-\frac1{2k}>s_c=\frac{d}2-\frac1k$, $k\geq 1$ \cite{BGT04}\\
\hline 
Tori $\mathbb{T}^d$ \footnotemark[1]{}

  & \vline & 
\begin{tabular}{ll}
$d=1$, $k=1$:& $s\geq 0>s_c=-\frac12$ \cite{Bou93}\\
$d=1$, $k\geq 2$:& $s>s_c$ \cite{Bou93}\\
$d=1$, $k\geq 3$:& $s\geq s_c$ \cite{HTT11, Wan13}\\
$d=2$, $k\geq 2$:& $s>s_c$ \cite{Bou93, GOW14}\\
$d\geq 3$, $k\geq 1$:& $s\geq s_c$ \cite{HTT11, HTT14, Wan13, GOW14, BD15, KV16}
\end{tabular}
\\

\hline 
\begin{tabular}{c}
     Spheres $\mathbb{S}^d$  \\
    (and Zoll manifolds) 
\end{tabular}
 & \vline & 
\begin{tabular}{ll}
$d=2$, $k=1$: & $s>\frac14>s_c=0$ \cite{BGT05}\\
$d=2$, $k\geq 2$: & $s>s_c$  \cite{Yan15}\\
$d=2$, $k\geq 3$: & $s\geq s_c$  \cite{Zha16}\\
$d\geq 3$, $k\geq 1$: & $s> s_c$ \cite{Yan15}\\
$d\geq 3$, $k\geq 2$: &$s\geq s_c$ \cite{Her13, Zha16}
\end{tabular}
\\

\hline 
\begin{tabular}{cc}
$\mathbb{S}^{d_1}\times\mathbb{S}^{d_2}\times\cdots\times\mathbb{S}^{d_{r_0}}\times\mathbb{T}^{r_1}$ \footnotemark[2]\\
$d_i\geq 2$, $i=1,\ldots,r_0$\\
$r:=r_0+r_1\geq 2$\\
$r_2:=$ number of 2-sphere factors\\
$r_3:=$ number of 3-sphere factors
\end{tabular}
& \vline & 
\begin{tabular}{ll}
$r_2=0$, $k\geq 1$: & $s> s_c$ \cite{BGT04, Zha21} [Z]\\
$\mathbb{S}^3\times\mathbb{T}^{r_1}$, $r_1\geq 2$, $k=1$: & $s\geq s_c$ \cite{DZZ25}\\
$\mathbb{S}^{d_1}\times \mathbb{S}^{d_2}$, $d_1,d_2\geq 4$, $k= 1$: & $s\geq s_c$ [Z]\\
$r_2=r_3=0$, $r\geq 3$, $k= 1$: & $s\geq s_c$ \cite{Zha21}\\
$r_2=0$, $k\geq 2$: & $s\geq s_c$ [Z]\\
$r_2=1$, $r\leq 11$, $k=1$: & $s> \frac{d}2-\frac{3}4>s_c=\frac{d}2-1$ [Z]\\
$r_2=1$, $r\geq 12$, $k=1$: & $s> \frac{d}2-\frac{r}{r+4}>s_c=\frac{d}2-1$ \cite{Zha21}\\
$r_2=1$, $k\geq 2$: & $s\geq s_c$ \cite{HS15} [Z]\\
$r_2=2$, $r\leq 4$, $k=1$: & $s\geq \frac d2-\frac12>s_c=\frac{d}2-1$ \cite{BGT04}\\
$r_2=2$, $r\geq 5$, $k=1$: & $s\geq \frac d2-\frac{r}{r+4}>s_c=\frac{d}2-1$ \cite{Zha21}\\
$r_2=2$, $k\geq 2$: & $s> s_c$ \cite{Zha21}, [Z]\\
$r_2=2$, $k\geq 3$: & $s\geq s_c$ [Z]\\
$r_2=3$, $r\leq 4$, $k=1$: & $s>\frac d2-\frac12>s_c=\frac d2-1$ \cite{BGT04}\\
$r_2=3$, $r\geq 5$, $k=1$: & $s>\frac d2-\frac {r}{r+4} >s_c=\frac d2-1$ \cite{Zha21}\\
$\mathbb{S}^2\times\mathbb{S}^2\times\mathbb{S}^2$, $k=2$: & $s>\frac d2-\frac37>s_c=\frac d2-\frac12$ \cite{Zha21}\\
$r_2=3$, $r\geq4$, $k=2$: & $s>s_c=\frac d2-\frac12$ \cite{Zha21}\\
$r_2=3$, $k\geq 3$: & $s>s_c$ \cite{BGT04}\\
$r_2=3$, $k\geq 5$: & $s\geq s_c$ [Z]\\
$r_2\geq 4$, $k=1$: & $s>\frac d2-\frac {r}{r+4} >s_c=\frac{d}2-1$ \cite{Zha21}\\
$r_2\geq 4$, $k\geq 2$: & $s>s_c$ \cite{Zha21}
\end{tabular}
\\

\hline 
\begin{tabular}{cc}
Symmetric spaces of compact type \\
of rank $r\geq 2$ 
\end{tabular}
& \vline &
\begin{tabular}{cc}
$r=2,3$, $k\geq 3$:& $s>s_c$ \cite{Zha21}\\

Rank-3 compact simple Lie groups \footnotemark[3], $k\geq 2$:
&$s>s_c$  \cite{Zha23}\\
$r\geq 4$, $k\geq 2$: & $s>s_c$ \cite{Zha21}\\
\end{tabular}
\\

\hline

\end{tabular}
}
\caption{Local well-posedness of \eqref{NLS} on compact manifolds ([Z]: the current paper)}
\label{lit}

\end{table}
\end{center}

\footnotetext[1]{Works for both rational and irrational rectangular tori. All the local well-posedness results of subcritical regularity work for general non-rectangular flat tori also.}

\footnotetext[2]{The sphere factors can all be replaced by rank-1 compact symmetric spaces of compact type.}

\footnotetext[3]{Consisting of $\text{SU}(4)$, $\text{SO}(6)$, $\text{PSO}(6)$, 
$\text{Spin}(7)$, $\text{SO}(7)$, $\text{Sp}(3)$ and $\text{PSp}(3)$.}

As is clear from the statement of Theorem~\ref{mls}, our approach of proving local well-posedness in Theorem~\ref{lwp} follows the standard one of a multi-linear Strichartz estimate, which was first utilized for NLS in the compact manifold setting by Bourgain \cite{Bou93} for tori, and then by Burq--G\'erard--Tzvetkov \cite{BGT05} for compact surfaces, both to treat sub-critical regularities, and later extended by Herr--Tataru--Tzvetkov \cite{HTT11} to treat critical regularities for the three-torus. Following a similar approach, a lot of subsequent work \cite{BGT052, Her13, HTT11, HTT14, HS15, KV16, Wan13, Yan15, Zha21, Zha16} appeared which provided further local well-posedness results for NLS on compact manifolds. The key ingredient of getting the multi-linear Strichartz estimates in these works is a multi-linear spectral projector estimate associated to the Laplace--Beltrami operator, which we review below:


\begin{thm}[Multi-linear spectral projector estimate of Burq--G\'erard--Tzvetkov \cite{BGT05, BGT052}]\label{mlspe}

Let $M$ be a compact Riemannian manifold of dimension $d\geq 2$ equipped with the Laplace--Beltrami operator $\Delta$. For $N_j\geq 1$, let the spectral projector be defined by 
$\chi_{N_j}=\chi(\sqrt{-\Delta}-N_j)$, where $\chi\in C_c^\infty(\R)$ with $\chi(0)=1$, and $j=1,2,\ldots,k+1$. Let $\delta_2=1$ if $\dim(M)=2$ and 0 otherwise. Let $\delta_3=1$ if $\dim(M)=3$ and 0 otherwise. Assume $N_1\geq N_2\geq \cdots\geq N_{k+1}\geq 1$. 
Then:\\ 
(i) For $k=1$, we have  
$$\left\|\chi_{N_1} f^1\ \chi_{N_2} f^2\right\|_{L^2(M)}\lesssim N_2^{\frac{d-2}{2}+\frac{\delta_2}4}(\log N_2)^{\frac{\delta_3}2} \|f^1\|_{L^2(M)}\|f^2\|_{L^2(M)}.$$
(ii) For $k\geq 2$ and $\eta>0$, we have 
$$\left\|\prod_{j=1}^{k+1}\chi_{N_j} f^j\right\|_{L^2(M)}\lesssim N_2^{\frac{d-2}{2}+\frac{\delta_2}4+\delta_3\eta }N_3^{\frac{d-1}{2}-\frac{\delta_2}4-\delta_3\eta}\prod_{j=4}^{k+1}N_j^{\frac{d-1}{2}}\prod_{j=1}^{k+1}\|f^j\|_{L^2(M)}.$$
\end{thm}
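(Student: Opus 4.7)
The plan is to follow the proof strategy developed in the Burq--G\'erard--Tzvetkov series of papers (\cite{BGT04, BGT05}), with the two parts of the statement handled separately.

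For part (i), I would represent the spectral projector via the half-wave propagator,
$$\chi(\sqrt{-\Delta}-N)f=\frac{1}{2\pi}\int_{\R}\hat\chi(t)\, e^{-iNt}\, e^{it\sqrt{-\Delta}} f\,dt,$$
so that the bilinear quantity $\|\chi_{N_1}f^1\chi_{N_2}f^2\|_{L^2}^2$ becomes a fourfold time integral of a bilinear pairing of wave propagators. On time scales below the injectivity radius of $M$ I would invoke H\"ormander's FIO parametrix for $e^{it\sqrt{-\Delta}}$, whose Schwartz kernel is an oscillatory integral with phase equal to the Riemannian distance. A $TT^*$ reduction combined with stationary phase in the angular variables produces the dispersive decay $|t|^{-(d-1)/2}$ of the wave kernel and yields the clean bound $N_2^{(d-2)/2}$ when $d\geq 4$. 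In $d=3$ the dispersive exponent is borderline and contributes the logarithmic loss $(\log N_2)^{1/2}$; in $d=2$ the concentration of eigenfunctions along stable geodesics (Knapp-type packets) is responsible for the sharp loss $N_2^{1/4}$.

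For part (ii), the plan is to reduce to (i) via H\"older's inequality combined with Sogge's universal sup-norm estimate $\|\chi_N f\|_{L^\infty}\lesssim N^{(d-1)/2}\|f\|_{L^2}$. The basic split pairs the two highest-frequency factors bilinearly and places each remaining factor in $L^\infty$,
$$\Big\|\prod_{j=1}^{k+1}\chi_{N_j}f^j\Big\|_{L^2}\leq\|\chi_{N_1}f^1\chi_{N_2}f^2\|_{L^2}\prod_{j=3}^{k+1}\|\chi_{N_j}f^j\|_{L^\infty},$$
which immediately yields the claim when $d\geq 4$. In $d=3$ one must absorb the $(\log N_2)^{1/2}$ coming from (i); I would interpolate this arrangement with an alternative H\"older splitting in which the bilinear estimate is applied to the pair $(f^1,f^3)$ and $f^2$ is estimated in $L^\infty$, so that each arrangement provides a complementary exponent and the geometric mean converts the log into the $\eta$-loss $N_2^\eta N_3^{-\eta}$. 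In $d=2$ a similar but finer redistribution, incorporating a Littlewood--Paley decomposition of a dual test function and applying the bilinear estimate band by band, symmetrizes the $N_2^{1/4}$ loss between $N_2$ and $N_3$.

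The main obstacle is the low-dimensional regime $d\in\{2,3\}$: in those cases the bilinear estimate of part (i) already saturates the available dispersive decay, and a careless iteration would propagate the loss onto the smaller-frequency factors, degrading the Sobolev exponents that feed into Theorems \ref{mls} and \ref{lwp}. Ensuring that the logarithmic and $N^{1/4}$ losses are confined to the two smallest frequencies $N_2, N_3$ in a balanced way is the technically delicate step of the argument.
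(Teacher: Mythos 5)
Your outline of part (i) is essentially the Burq--G\'erard--Tzvetkov argument that the paper follows (parametrix for the half-wave group, oscillatory integral with the distance function as phase, kernel decay via stationary phase), though the step you leave implicit is the decisive one: the largest frequency $N_1$ must contribute only an $O(1)$ factor, which comes from the $L^\infty_{t}L^2_{z}$ mapping bound for the parametrix (non-degeneracy of the mixed Hessian $\partial^2\varphi_r/\partial z\partial\omega$, Lemma \ref{split}), after which (i) is a single H\"older pairing $L^\infty_tL^2_z\times L^2_tL^\infty_z\to L^2$ in a split coordinate system $x=(t,z)$ on $M$ (not in the time variable of the wave group). Your reduction of (ii) to (i) plus Sogge's sup bound is correct for $d\ge 4$.

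The genuine gap is part (ii) for $d=2,3$: the stated trilinear bounds are \emph{not} consequences of (i) together with $L^\infty$ bounds and interpolation between H\"older arrangements. Test the case $d=2$, $k=2$, $N_1=N_2=N_3=N$: the claim is $N^{1/4}N^{1/4}=N^{1/2}$, whereas every pairing of the form $\|\chi_N f^a\chi_N f^b\|_{L^2}\|\chi_N f^c\|_{L^\infty}$ gives $N^{1/4}\cdot N^{1/2}=N^{3/4}$, and since all three arrangements give the same number, no geometric mean of them can beat $N^{3/4}$. Likewise in $d=3$ with $N_2=N_3=N$ the target is $N^{1/2+\eta}N^{1-\eta}=N^{3/2}$ with no logarithm, but every arrangement inherits the $(\log N)^{1/2}$ from (i) and the factor $(N_2/N_3)^{\eta}=1$ cannot absorb it; your claim that the geometric mean "converts the log into the $\eta$-loss" fails precisely in this near-diagonal regime. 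The missing ingredient is a family of mixed-norm estimates for the parametrix itself, applied one per factor rather than pairwise: $T_\lambda:L^2(M)\to L^4_tL^\infty_z$ with norm $\lambda^{1/4}$ in $d=2$ (a square-function--type estimate, Lemma \ref{b2}, which costs no more than the $L^2_tL^\infty_z$ bound) and $T_\lambda:L^2(M)\to L^p_tL^\infty_z$ with norm $\lambda^{(d-1)/2-1/p}$ for every $p>2$ in $d\ge 3$ (Lemma \ref{b3}, log-free away from $p=2$). One then runs a single multi-factor H\"older in $(t,z)$: factor $1$ in $L^\infty_tL^2_z$ (norm $O(1)$), factors $2$ and $3$ in $L^4_tL^\infty_z$ when $d=2$, or in $L^{1/(1/2-\eta)}_tL^\infty_z$ and $L^{1/\eta}_tL^\infty_z$ when $d=3$, and the remaining factors in $L^\infty$. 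This per-factor bookkeeping (together with carrying the parametrix remainder $R_\lambda$ through the same mixed norms, cf.\ Lemma \ref{Aij}) is what produces the exponents in (ii); it cannot be recovered from (i) used as a black box.
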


For a compact product manifold such as a product of {\it one} sphere and tori, the above multi-linear spectral projector estimate can be used to provide sharp local well-posedness results for the posed NLS by applying it to the sphere factor. This was done by Herr--Strunk \cite{HS15} for the product of a two-sphere and a circle. However, when there are more than one sphere factors in the product, the above multi-linear spectral projector estimate would lose track of essential geometric information if applied to a product of spheres. Instead, we prove the following alternative spectral projector estimates on a product manifold that respect the product structure, and to do so we must deal with the {\it joint} spectral projector associated with all the individual Laplace--Beltrami operators on the sphere factors:

\begin{thm}[Multi-linear joint spectral projector estimate]\label{mljspe}
Let $M_i$ be a compact Riemannian manifold of dimension $d_i\geq 2$, $i=1,2,\ldots,r$, $r\geq 1$. Let $M=M_1\times M_2\times\cdots \times M_r$ be their product. Let $\Delta_i$ denote the Laplace--Beltrami operator on $M_i$. 
Let $\chi_i\in C_c^\infty(\mathbb{R})$, $\chi_i(0)=1$. 
For $\lambda=(\lambda_1,\ldots,\lambda_r)\in\mathbb{R}^r$, set 
$|\lambda|=\sqrt{\lambda_1^2+\cdots+\lambda_r^2}$. Define the joint spectral projector around $\lambda\in\mathbb{R}^r_{\geq 1}$ as follows
$$\chi_\lambda:=\prod_{i=1}^r\chi_{i}(\sqrt{-\Delta_i}-\lambda_i).$$ 
Now consider $\lambda^j=(\lambda^j_1,\ldots,\lambda^j_r)\in\mathbb{R}^r_{\geq 1}$, $j=1,\ldots,k+1$. 
Suppose the spectral parameters $N_j:=|\lambda^j|$ are ordered such that $N_1\geq N_2\geq\cdots\geq N_{k+1}\geq 1$. Let $r_2$ (respectively, $r_3$) be the number of two-dimensional (respectively, three-dimensional) factors in the product $M_1\times M_2\times \cdots\times M_r$. Then
for 
$$C(N_1,\ldots,N_{k+1})
=\left\{
\begin{array}{ll}
   N_2^{\frac{d-2r}{2}+\frac{r_2}4}(\log N_2)^{\frac{r_3}2},  & \text{ if }k=1, \\
   N_2^{\frac{d-2r}{2}+\frac{r_2}4+r_3\eta}N_3^{\frac{d-r}{2}-\frac{r_2}4-r_3\eta}\prod_{j=4}^{k+1}N_j^{\frac{d-r}{2}},  & \text{ if }k\geq 2,
\end{array}
\right.
$$
where $\eta$ can be any positive number, 
we have 
$$\left\|\prod_{j=1}^{k+1}\chi_{\lambda^j} f^j\right\|_{L^2(M)}\lesssim C(N_1,\ldots,N_{k+1})\prod_{j=1}^{k+1}\|f^j\|_{L^2(M)}.$$

\end{thm}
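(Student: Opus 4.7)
The plan is to reduce Theorem \ref{mljspe} to the single-manifold multi-linear spectral projector estimate (Theorem \ref{mlspe}) by iterating it factor-by-factor over the product $M = M_1 \times \cdots \times M_r$, combined with Fubini. The key structural observation is that the joint projector factors as $\chi_{\lambda^j} = \prod_{i=1}^r P_i^j$ with $P_i^j := \chi_i(\sqrt{-\Delta_i} - \lambda_i^j)$: the $P_i^j$'s for distinct $i$ act on disjoint variables and therefore commute, and at each fixed value of the remaining variables, $P_i^j$ is precisely a spectral projector of the type appearing in Theorem \ref{mlspe} applied on the single manifold $M_i$.

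First I will write $\chi_{\lambda^j} f^j = P_1^j g_1^j$ with $g_1^j := (\prod_{i\geq 2} P_i^j) f^j$ and apply Theorem \ref{mlspe} on $M_1$ at each fixed tuple $y = (x_2,\ldots,x_r)$, obtaining
\begin{align*}
\Bigl\|\prod_j P_1^j g_1^j(\cdot,y)\Bigr\|_{L^2(M_1)}^2 \lesssim C_1^2 \prod_j \|g_1^j(\cdot,y)\|_{L^2(M_1)}^2
\end{align*}
with $C_1$ the constant of Theorem \ref{mlspe} for $M_1$. After integrating over $y$, expanding each $\|g_1^j(\cdot,y)\|_{L^2_{x_1}}^2$ as $\int |g_1^j(x_1^{(j)},y)|^2\,dx_1^{(j)}$, and using Fubini to move the $x_2$-integration to the inside, I can then apply Theorem \ref{mlspe} on $M_2$ at each fixed $(x_1^{(j)}, x_3,\ldots,x_r)$ to the reduced functions $g_2^j := (\prod_{i\geq 3} P_i^j) f^j$. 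Iterating this Fubini-and-BGT procedure through all $r$ factors collapses the estimate to
\begin{align*}
\Bigl\|\prod_j \chi_{\lambda^j} f^j\Bigr\|_{L^2(M)} \lesssim \Bigl(\prod_{i=1}^r C_i\Bigr) \prod_j \|f^j\|_{L^2(M)}.
\end{align*}

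It remains to match $\prod_i C_i$ with $C(N_1,\ldots,N_{k+1})$. The crucial ingredient is the ordering comparison $N_{(l)}^{(i)} \leq N_l$, where $N_{(l)}^{(i)}$ denotes the $l$-th largest element of $\{\lambda_i^j\}_{j=1}^{k+1}$: this follows from $\lambda_i^j \leq |\lambda^j| = N_j$, which means that whenever $l$ of the $\lambda_i^j$'s exceed some value $a$, the corresponding $N_j$'s do as well, so that $N_l \geq N_{(l)}^{(i)}$. Combining this with the identities $\sum_i \tfrac{d_i-2}{2} = \tfrac{d-2r}{2}$ and $\sum_i \tfrac{d_i-1}{2} = \tfrac{d-r}{2}$, together with the observation that the dimension-2 and dimension-3 indicators $\delta_2,\delta_3$ from Theorem \ref{mlspe} sum across $i$ to $r_2$ and $r_3$ respectively, yields exactly the exponents of $C(N_1,\ldots,N_{k+1})$ in both the $k=1$ and $k\geq 2$ cases. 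The main pitfall to be checked carefully is that after each Fubini expansion the resulting $k+1$ functions of the next factor variable $x_{i+1}$ remain spectrally localized under $\sqrt{-\Delta_{i+1}}$ as required by the subsequent BGT application; this is automatic from the commutation of the $P_l^j$'s across distinct $l$, but it is precisely the point that uses the product structure of $M$.
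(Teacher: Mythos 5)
Your proof is correct, but it takes a genuinely different route from the paper. You treat Theorem \ref{mlspe} as a black box and iterate it factor-by-factor via Fubini, using the tensor-product structure of the joint projector (each $\chi_i(\sqrt{-\Delta_i}-\lambda_i^j)$ acts as $A\otimes I$, hence commutes with freezing the remaining variables) together with the order-statistics inequality $\lambda_i^{(l)}\leq N_l$. The only points left implicit are (a) that every exponent in each single-factor constant is nonnegative, which is what legitimizes the monotone substitution $\lambda_i^{(l)}\mapsto N_l$ — this holds for all $d_i\geq 2$ provided $\eta$ is small, and the case of large $\eta$ then follows a fortiori since $(N_2/N_3)^{r_3\eta}\geq 1$ — and (b) the measure-theoretic routine of applying the single-factor estimate at a.e.\ frozen point. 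The paper instead reopens the Burq--G\'erard--Tzvetkov proof: it splits each one-factor projector into parametrix plus remainder $T_{i,\lambda}+R_{i,\lambda}$, localizes by partitions of unity in space and in the angular variable, proves uniform $L^2\to L^{p}_{t_i}L^{q}_{z_i}$ mixed-norm operator bounds for both pieces (Lemma \ref{Aij}), and then runs a multi-parameter H\"older--Minkowski scheme across the $r$ factors with a per-factor choice of exponents. What your approach buys is brevity and complete independence from the parametrix machinery; what the paper's buys is self-containedness: its argument simultaneously establishes the $r=1$ base case — i.e.\ Theorem \ref{mlspe} itself in its full general-$d$, general-$k$ form, which is attributed to \cite{BGT052} but is precisely what your induction must take on faith — and retains the flexibility of assigning different mixed norms to different functions on each factor. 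Granting Theorem \ref{mlspe} as stated, your reduction is sound and yields exactly the constant $C(N_1,\ldots,N_{k+1})$.
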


When $r=1$, the above theorem reduces to Theorem~\ref{mlspe}. But for $r\geq 2$, the above bound of the joint spectral projector is smaller than that of the spectral projector associated to the whole Laplace--Beltrami operator, and we use the former to bound joint eigenfunctions of all the individual Laplace--Beltrami operators on the sphere factors of the product manifold. The proof of the above theorem is a multi-parameter generalization of the argument of Burq--G\'erard--Tzvetkov, which in particular involves the H\"ormander parametrix for the half-wave operator on a compact manifold. The $k=1$ case was already treated by us in \cite{Zha21}. A simple but key idea of this paper is to treat the main and remainder terms of the parametrix at the same time at every step, instead of dealing with the remainder term at the end, which was the approach in \cite{BGT05, BGT052, Zha21} and would not easily generalize to higher $k$'s for a higher $r$. 

\begin{rem}
As observed in \cite{BGT052}, by testing against either highest weight or zonal spherical harmonics on spheres, the estimate in part (i) of Theorem~\ref{mlspe} is sharp except for the logarithmic factor, while the estimate in part (ii) is almost sharp---it would be sharp if one completely removes the arbitrarily small $\eta$ terms in the powers of $N_2$ and $N_3$. By considering products of eigenfunctions on each factor, a similar sharpness statement holds for Theorem~\ref{mljspe}. 
\end{rem}

Another key ingredient in proving Theorem~\ref{mls} is to get sharp estimates on some exponential sums which correspond to restricting the solution of the linear Schr\"odinger equation on tori to parts or none of the spatial dimensions while always keeping the time variable. We make the following: 

\begin{conj}[Anisotropic $L^\infty_{x_0} L^p_{x_1}$-type Strichartz estimate on $\mathbb{T}^{r_0}_{x_0}\times\mathbb{T}^{r_1}_{x_1}$]\label{rs}
Let $r_0\geq 1$ and $r_1\geq 0$ be integers and let $r=r_0+r_1$. 
Let ${\bf b}=(b_1,\ldots,b_r)\in\mathbb{R}^r$. For $N>1$, let 
$${\bf J}_{{\bf b},N}:=\{\xi=(n_1,\ldots,n_r)\in\mathbb{Z}^r: b_i\leq n_i\leq b_i+N, \ i=1,\ldots,r\}.$$
Let ${\xi}_1:=(n_{r_0+1},\ldots,n_{r})$, and ${x}_1\in\mathbb{R}^{r_1}$. 
Then for all $p>\max\{\frac{2(r_1+2)}{r},2\}$, it holds 
\begin{align}\label{rse}
    \left\|
    \sum_{\xi\in {\bf J}_{{\bf b},N}}a_\xi e^{-it|\xi|^2+i\langle x_1,{\xi}_1\rangle}
    \right\|_{L^p_{t,{x}_1}([0,2\pi]^{1+r_1})}\lesssim N^{\frac{r}{2}-\frac{r_1+2}{p}}\|a_\xi\|_{l^2(\mathbb{Z}^r)}
\end{align}
with the implicit constant independent of ${\bf b}$. 
\end{conj}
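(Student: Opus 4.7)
Interpret $F(t,x_1) := \sum_{\xi \in {\bf J}_{{\bf b},N}} a_\xi e^{-it|\xi|^2 + i\langle x_1,\xi_1\rangle}$ as the linear Schr\"odinger evolution $e^{it\Delta} f$ on $\mathbb{T}^r$ of the initial datum $f(x)=\sum_\xi a_\xi e^{i\langle x,\xi\rangle}$, restricted to the codimension-$r_0$ subtorus $\{x_0=0\}\times \mathbb{T}^{r_1}$, where $x_0\in \mathbb{T}^{r_0}$. Writing $\xi=(\xi_0,\xi_1)$ with $\xi_0=(n_1,\ldots,n_{r_0})$, and setting ${\bf b}_0:=(b_1,\ldots,b_{r_0})$, ${\bf b}_1:=(b_{r_0+1},\ldots,b_r)$, the plan is to attempt the conjecture by Riesz--Thorin interpolation between two endpoint bounds on $F$.

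For the $L^\infty$ endpoint, Cauchy--Schwarz in $\xi$ gives $\|F\|_{L^\infty_{t,x_1}} \leq |{\bf J}_{{\bf b},N}|^{1/2}\|a_\xi\|_{\ell^2} = N^{r/2}\|a_\xi\|_{\ell^2}$. For the $L^2$ endpoint, Parseval on $\mathbb{T}^{1+r_1}$ combined with Cauchy--Schwarz in $\xi_0$ yields
$$\|F\|_{L^2_{t,x_1}}^2 \lesssim R_{\max}\cdot \|a_\xi\|_{\ell^2}^2, \qquad R_{\max}:=\sup_{m\in \mathbb{Z}}\#\{\xi_0\in {\bf J}_{{\bf b}_0,N} : |\xi_0|^2=m\}.$$
For $r_0\geq 2$, a uniform Gauss-circle-type divisor bound gives $R_{\max}\lesssim N^{r_0-2+\epsilon}$ (the $\epsilon>0$ is needed only in the critical case $r_0=2$, via the divisor bound for sums of two squares). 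Riesz--Thorin interpolation then produces the exponent
$$\|F\|_{L^p_{t,x_1}}\lesssim N^{r/2 - (r_1+2)/p + \epsilon}\|a_\xi\|_{\ell^2}, \qquad p>2,$$
resolving the conjecture (up to the arbitrarily small $\epsilon$ loss) throughout the parameter range $r_0\geq 2$ in which the constraint $p>2$ is the active one.

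The main obstacle is the case $r_0=1$. Here $R_{\max}\leq 2$, so the $L^2$ endpoint carries no gain, and interpolation with $L^\infty$ falls short of the conjectured $N^{r/2-(r_1+2)/p}$ by a factor of $N^{1/p}$. Closing this gap is equivalent to proving the improved endpoint $\|F\|_{L^{p_0}}\lesssim \|a_\xi\|_{\ell^2}$ at $p_0=2(r_1+2)/(r_1+1)$, which amounts to an $\ell^2$-decoupling estimate on $\mathbb{T}^{1+r_1}$ for the ``thickened paraboloid''
$$\mathcal{S}=\{(n_1^2+|\xi_1|^2,\ \xi_1):\ n_1\in [b_1,b_1+N],\ \xi_1\in {\bf J}_{{\bf b}_1,N}\}\subset \mathbb{Z}^{1+r_1},$$
a union of $O(N)$ vertical translates of the standard $r_1$-dimensional paraboloid. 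A natural attack is to apply Bourgain--Demeter $\ell^2$ decoupling to each translate individually and then combine via an almost-orthogonality argument across the $\tau$-separated translates. The base case $r_1=0$ reduces to the classical bound $\|F\|_{L^4_t}^4 \lesssim N^\epsilon \|a_\xi\|_{\ell^2}^4$, proved via the divisor bound on the number of quadruples satisfying $n_1^2-n_2^2+n_3^2-n_4^2=0$; the generalization to $r_1\geq 1$ is the hard part.

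A more modest partial result for $r_0=1$ follows from a Sobolev-trace strategy: writing $F(t,x_1)=G(t,0,x_1)$ for the $\mathbb{T}^{1+r}$ Schr\"odinger solution $G$, Bernstein in $x_0$ gives the pointwise-in-$(t,x_1)$ inequality $|G(\cdot,0,\cdot)|\lesssim N^{r_0/p}\|G(\cdot,\cdot,\cdot)\|_{L^p_{x_0}}$, hence $\|F\|_{L^p_{t,x_1}}\lesssim N^{r_0/p}\|G\|_{L^p(\mathbb{T}^{1+r})}$. Combined with the standard Bourgain--Demeter decoupling on $\mathbb{T}^{1+r}$, this yields the conjectured bound (with $\epsilon$ loss) in the restricted range $p\geq 2(r+2)/r$; extending down to $p>2(r_1+2)/r$ when $r_0=1$ is precisely the content that lies beyond standard paraboloid decoupling, and constitutes the core open part of the conjecture.
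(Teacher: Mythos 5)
The statement you are addressing is posed in the paper as a conjecture: the paper proves only the special case $r_1=0$ (Lemma \ref{exp}, Section \ref{expsum}) and observes in Remark \ref{remcounting} that for $r_0\geq 2$ the counting argument yields the estimate with an $N^\varepsilon$ loss. Your proposal correctly reproduces this landscape: the $L^2$/$L^\infty$ interpolation built on the lattice-point count for $r_0\geq 2$, the Bernstein-plus-Bourgain--Demeter bound in the restricted range $p>\frac{2(r+2)}{r}$ (this is exactly the proof of \eqref{i1} in Lemma \ref{e1}), and the identification of $r_0=1$, $r_1\geq 1$ as the genuinely open core. Your interpolation arithmetic is also correct: with $\theta=1-2/p$ the exponents $N^{(r_0-2)/2}$ and $N^{r/2}$ combine to $N^{r/2-(r_1+2)/p}$.

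Measured against what the paper actually proves, however, there are two concrete gaps. First, the conjecture (and Lemma \ref{exp}) carries no $N^\varepsilon$ loss, and every route you propose does. The paper removes the loss in the case $r_1=0$ by Bourgain's level-set method: one bounds the number $R$ of $N^{-2}$-separated times at which the sum exceeds $\delta N^{r/2}$, which after Cauchy--Schwarz reduces to controlling $\sum_{r,r'}\prod_{i=1}^r|f_{b_i}(t_r-t_{r'})|^{\gamma}$ for the Weyl sums $f_{b_i}(t)=\sum_n\sigma_{b_i}^2(n)e^{2\pi itn^2}$, using the major-arc bound $|f_{b_i}(t)|\lesssim q^{-1/2}(|t-a/q|+N^{-2})^{-1/2}$ together with a minor-arc bound from Dirichlet approximation; this machinery is entirely absent from your sketch, and without it you do not recover the case the paper fully establishes. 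Second, your ``uniform Gauss-circle-type divisor bound'' for $\#\{(n_1,n_2)\in([b_1,b_1+N]\times[b_2,b_2+N])\cap\mathbb{Z}^2: n_1^2+n_2^2=A'\}$ is not justified as stated: since ${\bf b}$ is an arbitrary real vector, $A'$ may be enormous compared with $N$, and the bound $r_2(A')\lesssim_\varepsilon (A')^{\varepsilon}$ does not yield $N^{\varepsilon}$. The paper splits into $A'\leq N^{8}$, where the divisor bound suffices, and $A'>N^{8}$, where a geometric argument shows the circle of radius $\sqrt{A'}>N^4$ meets the $N$-box in at most two lattice points (an inscribed angle would have to be both $\gtrsim N^{-2}$ and $\lesssim N^{-3}$). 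You would need to supply both ingredients for your $r_0\geq 2$ discussion to be complete even with the $\varepsilon$ loss, and the Bourgain-type argument on top of that to match the paper's Lemma \ref{exp}.
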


In particular, we will prove the following special case of the above conjecture in order to treat the cubic NLS posed on a product of two spheres both of dimension at least 4, at critical regularity.  

\begin{lem}\label{exp}
    Conjecture \ref{rs} holds for $r_1=0$. 
\end{lem}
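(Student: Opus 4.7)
The plan is to proceed by a duality argument that reduces the desired inequality to a number-theoretic count of lattice points on the sphere $\{|\xi|^2=m\}$ restricted to the box ${\bf J}_{{\bf b},N}$. Setting $F(t):=\sum_{\xi\in {\bf J}_{{\bf b},N}} a_\xi e^{-it|\xi|^2}$ and
\[
r_N(m) \; := \; \#\{\xi\in {\bf J}_{{\bf b},N}\,:\, |\xi|^2=m\},
\]
an application of $L^p$--$L^{p'}$ duality followed by Cauchy--Schwarz in $\xi$ on $\int g\,F\,dt = 2\pi\sum_\xi a_\xi\,\widehat g(|\xi|^2)$ yields
\[
\|F\|_{L^p([0,2\pi])}^2 \lesssim \|a\|_{l^2}^2 \sup_{\|g\|_{L^{p'}}\leq 1}\sum_{m\in\mathbb{Z}} r_N(m)\,|\widehat g(m)|^2,
\]
so the claim is equivalent to the weighted bilinear estimate $\sum_m r_N(m)\,|\widehat g(m)|^2 \lesssim N^{r-4/p}\,\|g\|_{L^{p'}}^2$.

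The second step would be to apply H\"older's inequality in $m$ with conjugate exponents $q:=p/(p-2)$ and $p/2$, then the Hausdorff--Young inequality (valid because $p\geq 2$):
\[
\sum_m r_N(m)\,|\widehat g(m)|^2 \leq \|r_N\|_{l^{q}(\mathbb{Z})}\,\|\widehat g\|_{l^p(\mathbb{Z})}^2 \leq \|r_N\|_{l^{q}(\mathbb{Z})}\,\|g\|_{L^{p'}}^2.
\]
The matter thus reduces to the purely combinatorial bound $\|r_N\|_{l^{p/(p-2)}(\mathbb{Z})}\lesssim N^{r-4/p}$. Combining the trivial identity $\|r_N\|_{l^1}=|{\bf J}_{{\bf b},N}|\sim N^r$ with the H\"older interpolation $\|r_N\|_{l^q}^q \leq \|r_N\|_{l^\infty}^{q-1}\,\|r_N\|_{l^1}$ reduces everything to a sharp bound on $\|r_N\|_{l^\infty}$.

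For $r\geq 4$, the classical estimate $r_r(m)\lesssim m^{(r-2)/2}$ on the number of representations of a positive integer as an ordered sum of $r$ squares, combined with $|\xi|^2\lesssim N^2$ for $\xi\in {\bf J}_{{\bf b},N}$, gives $\|r_N\|_{l^\infty}\lesssim N^{r-2}$, and the H\"older interpolation then recovers exactly the target $\|r_N\|_{l^q}\lesssim N^{(r-2)(1-1/q)+r/q} = N^{r-4/p}$, producing the clean bound. The main obstacle is the low-dimensional regime $r\leq 3$. For the case of primary interest, $r=2$ (used in the proof of Theorem~\ref{mls}(iii) for $\mathbb{S}^{d_1}\times\mathbb{S}^{d_2}$), $r_N(m)$ is dominated by the divisor function, for which only $\tau(m)\lesssim_\varepsilon m^\varepsilon$ holds pointwise; to preserve the clean exponent I would bypass the $l^\infty$ bound and estimate $\|r_N\|_{l^q}$ directly via divisor moment asymptotics of the form $\sum_{m\leq N^2}\tau(m)^q\lesssim N^2(\log N)^{O_q(1)}$, absorbing the logarithmic overhead into the implicit constant, and analogous moment bounds on Gauss's three-squares function handle $r=3$. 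The case $r=1$ is genuinely different---the sparsity of the set of squares makes the duality--H\"older approach produce only $N^{1/2-1/p}$, which is strictly weaker than the target for $p>4$; to close this case I would instead start from Bourgain's $L^6$ Strichartz estimate on $\mathbb{T}^1$ and restrict it to the time axis via the standard localization bound $\|G(\cdot,0)\|_{L^p_t}\lesssim N^{1/p}\|G\|_{L^p(\mathbb{T}^2)}$, then interpolate with the trivial $L^\infty$ bound to cover the full range $p>4$.
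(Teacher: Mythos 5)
Your duality/H\"older/Hausdorff--Young reduction to the weighted count $\|r_N\|_{l^{p/(p-2)}}$ is sound, and at $p=2$ it is essentially the paper's own starting point, but the argument has two genuine gaps. First, the required uniformity in ${\bf b}$ is lost: for $\xi\in{\bf J}_{{\bf b},N}$ one has $|\xi|^2\lesssim |{\bf b}|^2+N^2$, not $|\xi|^2\lesssim N^2$, and ${\bf b}\in\R^r$ is arbitrary (in the application to Theorem \ref{mls} the cubes sit at distance up to $N_1\gg N_2$ from the origin). So the classical bounds $r_r(m)\lesssim m^{(r-2)/2}$ and $\tau(m)\lesssim_\varepsilon m^\varepsilon$, and divisor-moment asymptotics over $m\leq N^2$, do not apply: what must be counted is the number of lattice points on a sphere of possibly enormous radius intersected with a box of side $N$. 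The paper handles exactly this by freezing $r-2$ coordinates and counting lattice points on a circle of radius $\sqrt{A'}$ inside a square of side $N$: the divisor bound gives $\lesssim N^\varepsilon$ when $A'\leq N^8$, and a curvature/inscribed-angle argument shows there are at most $2$ such points when $A'>N^8$. This geometric input is missing from your proposal and cannot be replaced by representation-number bounds expressed in terms of $m$.

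Second, even granting the correct counting, this route only yields \eqref{rse} with an $N^\varepsilon$ or $(\log N)^{O(1)}$ loss; such losses cannot be ``absorbed into the implicit constant'', and interpolating a lossy bound with the trivial $L^\infty$ bound propagates the loss rather than removing it. The lossy version is precisely what the paper records in Remark \ref{remcounting}; to get the clean exponent asserted in the Lemma the paper runs a Bourgain-type level-set argument (Weyl differencing for the one-dimensional sums $f_{b_i}$, major/minor arcs via Dirichlet approximation, and a bound on the number of $N^{-2}$-separated large times), which has no counterpart in your proposal. The same issue defeats the $r=1$ sketch: Bourgain's $L^6(\T^2)$ estimate carries an $N^\varepsilon$ loss, restriction to the time axis and interpolation with $L^\infty$ keeps that loss and in any case misses the range $4<p<6$; the clean statement for $p$ near $4$ is exactly the $\Lambda(p)$-set theorem of Bourgain/Herr that the paper cites for $r=1$.
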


\begin{rem}
    It is also natural to consider related noncompact models by replacing the toric factor $\mathbb{T}^{r_1}$ by a Euclidean one or more generally a waveguide of the form $\mathbb{R}^{r_2}\times\mathbb{T}^{r_1-r_2}$. We expect all results obtained in this paper to hold for such a mixed model $\mathbb{S}^{d_1}\times\cdots\times\mathbb{S}^{d_{r_0}}\times\mathbb{R}^{r_2}\times\mathbb{T}^{r_1-r_2}$ also, by following a similar framework. 
\end{rem}

\begin{rem}\label{rem: S3T}
    A specific model, namely the cubic NLS on $\mathbb{S}^3\times\mathbb{T}$, is particularly important due to its energy-critical nature. Although any subcritical regularity $s>s_c=1$ is already known to yield well-posedness (see Table \ref{lit}), local well-posedness in the energy space is still missing. Recently, the authors of \cite{DZZ25} successfully established the sharp bilinear eigenfunction estimate on $\mathbb{S}^3$, which immediately has the consequence of critical well-posedness for the cubic NLS on $\mathbb{S}^3\times\mathbb{T}^{r_1}$ with any $r_1\geq 2$. However, to get critical well-posedness for the case $r_1=1$, one would need the conjectured estimate \eqref{rse} for some $p<4$ in the case $(r_1,r_0)=(1,1)$, which is still missing. 
    On the related model $\mathbb{S}^3\times\mathbb{R}$, the authors of \cite{DZZ25} was able to prove critical well-posedness for the cubic NLS, by establishing a weaker substitute for the anisotropic $L^\infty_{x_0}L^p_{x_1}$-type Strichartz estimate on $\mathbb{T}_{x_0}\times\mathbb{R}_{x_1}$ (for any $p<4$)---how this could be adapted to the  $\mathbb{T}_{x_0}\times\mathbb{T}_{x_1}$ setting is under investigation. 
\end{rem}

\subsection*{Organization of paper} 
We first prove in Section \ref{mlj} the multi-linear joint spectral estimates of Theorem~\ref{mljspe}, and then use it to prove the multi-linear Strichartz estimates of Theorem~\ref{mls} in Section \ref{mlstr}. In Section \ref{wellposed}, we quickly prove Theorem~\ref{lwp} using Theorem~\ref{mls}, leaving well-known details to references. In Section \ref{expsum} we prove the exponential sum estimate of Lemma \ref{exp}.  In the Appendix, we review the known linear Strichartz estimates on products of spheres and tori and their consequences for local well-posedness, in particular making sure that Theorem~\ref{lwp} is all new.

\subsection*{Acknowledgments}
I would like to thank Ciprian Demeter for suggesting the method of proving Lemma \ref{exp} and various other helpful discussions. I would also like to thank Zehua Zhao for many helpful discussions. I thank the referee for their valuable comments, which improved the paper. 

\section{Proof of Theorem~\ref{mljspe}}\label{mlj}
\subsection{Review of Burq--G\'erard--Tzvetkov \texorpdfstring{\cite{BGT052}}{BGT052}}
Let $\chi\in C_c^\infty(\R)$ be a cutoff function such that $\widehat{\chi}(\tau)$ is supported in the set 
$\{\tau\in\mathbb{R}: \varepsilon\leq\tau\leq 2\varepsilon\}$, with $\varepsilon>0$ determined later in Lemma \ref{split}. Let $M$ be a compact Riemannian manifold of dimension $d\geq 2$ equipped with the Laplace--Beltrami operator $\Delta$. We will use: 

\begin{lem}[Lemma 2.3 of \cite{BGT052}]\label{param}
 Let $\lambda\geq 1$.     There exists $\varepsilon_0$ such that for all $\varepsilon\in(0,\varepsilon_0)$ and all $N\geq 1$, we have the splitting 
    \begin{align*}
        \chi(\sqrt{-\Delta}-\lambda)=T_\lambda f + R_\lambda f,
    \end{align*}
    with 
    \begin{align}\label{Rb}
        \|R_\lambda f\|_{H^k(M)}\lesssim_{k,N} \lambda^{k-N}\|f\|_{L^2(M)}, \ k=0,\ldots,N. 
    \end{align}
    Moreover, there exists $\delta>0$, and for every $x_0\in M$, a system of coordinates $V\in\mathbb{R}^d$ containing $0\in\mathbb{R}^d$ such that for $x\in V$, $|x|\leq \delta$, 
    \begin{align}\label{Tlambda}
        T_\lambda f(x)=\lambda^{\frac{d-1}{2}}\int_{\mathbb{R}^d} e^{it\varphi(x,y)}a(x,y,\lambda)f(y)\ dy,
    \end{align}
    where $a(x,y,\lambda)$ is a polynomial in $\lambda^{-1}$ with smooth coefficients supported in the set 
    \begin{align*}
        \{(x,y)\in V\times V: |x|\leq\delta \lesssim \varepsilon/C\leq|y|\leq C\varepsilon\},
    \end{align*}
    and $-\varphi(x,y)$ is the geodesic distance between $x$ and $y$. 
\end{lem}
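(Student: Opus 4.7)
My approach would invoke the H\"ormander FIO parametrix for the half-wave propagator $e^{it\sqrt{-\Delta}}$ on a compact Riemannian manifold. By Fourier inversion,
$$\chi(\sqrt{-\Delta}-\lambda)f(x)=\frac{1}{2\pi}\int_{\R}\widehat\chi(t)e^{-it\lambda}\bigl(e^{it\sqrt{-\Delta}}f\bigr)(x)\,dt,$$
and since $\widehat\chi$ is supported in $[\varepsilon,2\varepsilon]$, only times $t$ in this small window contribute. Choosing $\varepsilon_0$ smaller than a fixed fraction of the injectivity radius of $M$, for each $x_0\in M$ I would work in a normal coordinate chart $V$ and apply the classical H\"ormander parametrix: $e^{it\sqrt{-\Delta}}f(x)$ equals, modulo a smoothing operator, an oscillatory integral $\int e^{i\psi(x,y,\xi,t)}a(x,y,\xi,t)f(y)\,dy\,d\xi$ whose phase is $\psi(x,y,\xi,t)=\langle x-y,\xi\rangle+t|\xi|_{g(y)}$ at leading order, and whose amplitude carries the standard polyhomogeneous expansion in $|\xi|^{-1}$.

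Substituting this into the Fourier inversion formula and exchanging orders, the next step is to integrate out $(t,\xi)$ by stationary phase. Passing to polar coordinates $\xi=\rho\omega$ and performing the $(t,\rho)$ integration against $\widehat\chi(t)e^{-it\lambda}$ shows that the integration concentrates on $\rho\sim\lambda$, producing the prefactor $\lambda^{(d-1)/2}$ from stationary phase in two variables. The remaining $\omega$-integration, combined with the Gauss lemma in normal coordinates, converts the phase into $-\lambda\cdot\mathrm{dist}_g(x,y)$, matching the stated $\varphi(x,y)$. The support condition $\varepsilon/C\leq|y|\leq C\varepsilon$ follows because the critical point of the $(t,\rho)$ phase forces $|x-y|\approx t\in[\varepsilon,2\varepsilon]$, and taking $\delta$ much smaller than $\varepsilon$ makes $|y|\sim|x-y|$ uniformly for $x\in V$ with $|x|\leq\delta$.

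Finally, the remainder $R_\lambda$ collects (i) the smoothing error in the H\"ormander parametrix and (ii) the contribution of truncating the amplitude asymptotic expansion in $\lambda^{-1}$ at an order depending on $N$. Each produces operators bounded from $L^2$ to $H^k$ with a gain of any prescribed power $\lambda^{k-N}$, which yields \eqref{Rb}. The main technical hurdle I anticipate is the bookkeeping: one must confirm that the stationary point of the $(t,\rho)$ phase lies in the effective support of $\widehat\chi$ times the conic cutoff of $a$, verify that the reduced amplitude $a(x,y,\lambda)$ is indeed polynomial in $\lambda^{-1}$, and track its support precisely. These are essentially the calculations in Lemma 2.3 of \cite{BGT052}, executed with enough care to extract the form of $\varphi$ as the geodesic distance and the factor $\lambda^{(d-1)/2}$.
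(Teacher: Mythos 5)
This lemma is quoted verbatim from Lemma 2.3 of \cite{BGT052} and the paper gives no proof of its own, so the only comparison is with the source: your sketch reproduces exactly the standard argument used there (Fourier inversion of $\chi(\sqrt{-\Delta}-\lambda)$ against $\widehat\chi(t)e^{-it\lambda}$, the H\"ormander FIO parametrix for $e^{it\sqrt{-\Delta}}$, stationary phase in $(t,\rho)$ producing the factor $\lambda^{(d-1)/2}$ and the phase $\lambda\varphi(x,y)=-\lambda\,\mathrm{dist}_g(x,y)$, with the support condition $\varepsilon/C\leq|y|\leq C\varepsilon$ forced by the critical point $t\approx|x-y|\in[\varepsilon,2\varepsilon]$ and $\delta\ll\varepsilon$, and the remainder absorbing the smoothing error and the truncation of the amplitude expansion). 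The outline is correct and is essentially the same route as the cited proof.
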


We slightly changed the definition of $T_\lambda$ which would better serve our exposition. 
Next, we represent $y$ in geodesic coordinates as $y=\exp_0(r\omega)$, $\varepsilon/C<r< C\varepsilon$, $\omega\in\mathbb{S}^{d-1}$. 
Clearly there exists a smooth positive function $\kappa(r,\omega)$ such that $dy=\kappa(r,\omega)\ dr\ d\omega$. 
For $|x|\leq\delta$ and $\omega\in\mathbb{S}^{d-1}$, denote 
$$\varphi_r(x,\omega):=\varphi(x,\exp_0(r\omega)),$$
and $a_r(x,\omega,\lambda):=\kappa(r,\omega)a(x,\exp_0(r\omega),\lambda)$. 
Then \eqref{Tlambda} becomes
\begin{align}\label{Tformula}
T_\lambda f(x)=\lambda^{\frac{d-1}{2}}\int_{\varepsilon/C}^{C\varepsilon} 
\int_{\mathbb{S}^{d-1}}e^{i\lambda\varphi_r(x,\omega)}a_r(x,\omega,\lambda)f(\exp_0(r\omega))\ d\omega
\ dr.
\end{align}

Set 
$$
\Lambda(d,\lambda):=
\left\{
\begin{array}{ll}
\lambda^{\frac14}& \text{ if }d=2,\\
\lambda^{\frac12}\log^{\frac12}(\lambda)& \text{ if }d=3,\\
\lambda^{\frac{d-2}2}& \text{ if }d\geq 4.\\
\end{array}
\right.
$$
We will use:

\begin{lem}[Lemma 2.10 of \cite{BGT052}]\label{b1}
Let $x=(t,z)\in\mathbb{R}\times\mathbb{R}^{d-1}$ be any local system of coordinates near $(0,0)$. Then the operator 
$$f\in L^2(M)\mapsto T_\lambda f (t,z)\in L^2_tL^\infty_z(\mathbb{R}\times\mathbb{R}^{d-1})$$
is continuous with norm bounded by $\lesssim\Lambda(d,\lambda)$. 
\end{lem}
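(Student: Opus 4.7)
This is Lemma 2.10 of \cite{BGT052}, so my plan is to reproduce their $TT^*$-and-stationary-phase argument, itself modeled on Sogge's approach to $L^p$ spectral projector bounds.

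First, I would use \eqref{Tformula} to decompose $T_\lambda f = \int_{\varepsilon/C}^{C\varepsilon} T_\lambda^r f_r\, dr$, where $f_r(\omega) := f(\exp_0(r\omega))$ and $T_\lambda^r$ is the oscillatory integral operator on $L^2(\mathbb{S}^{d-1})$ defined by
\[
T_\lambda^r g(x) := \lambda^{(d-1)/2}\int_{\mathbb{S}^{d-1}} e^{i\lambda\varphi_r(x, \omega)} a_r(x, \omega, \lambda) g(\omega)\, d\omega.
\]
By Minkowski's inequality in $r$ combined with a Jacobian comparison between $\|f_r\|_{L^2(\mathbb{S}^{d-1})}$ and $\|f\|_{L^2(M)}$ (both controlled by $\varepsilon$-dependent but $\lambda$-independent constants), it suffices to show the uniform bound $\|T_\lambda^r\|_{L^2(\mathbb{S}^{d-1}) \to L^2_tL^\infty_z} \lesssim \Lambda(d, \lambda)$ for $r \in [\varepsilon/C, C\varepsilon]$.

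Second, by $TT^*$ duality (with $(L^2_tL^\infty_z)^* = L^2_tL^1_z$), the problem reduces to bounding $\|T_\lambda^r (T_\lambda^r)^*\|_{L^2_tL^1_z \to L^2_tL^\infty_z} \lesssim \Lambda(d, \lambda)^2$. The kernel is
\[
K_r(x, x') = \lambda^{d-1} \int_{\mathbb{S}^{d-1}} e^{i\lambda[\varphi_r(x, \omega) - \varphi_r(x', \omega)]} a_r(x, \omega, \lambda)\, \overline{a_r(x', \omega, \lambda)}\, d\omega.
\]
Stationary phase in the $d-1$ variables on $\mathbb{S}^{d-1}$ --- the non-degeneracy of the Hessian following from $\varphi_r$ being minus the geodesic distance, whose gradient on $\mathbb{S}^{d-1}$ gives the projection onto $T_\omega \mathbb{S}^{d-1}$ of the unit tangent to the geodesic issuing from $\exp_0(r\omega)$ --- yields $|K_r(x, x')| \lesssim \lambda^{(d-1)/2}$ in the bulk, together with the trivial diagonal bound $|K_r(x, x')| \lesssim \lambda^{d-1}$ for $|x - x'| \lesssim 1/\lambda$, with oscillatory factor $e^{i\lambda \operatorname{dist}(x, x')}$. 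This matches the kernel structure of a half-wave propagator at frequency $\lambda$, concentrated near the wavefront $\{|t - t'| \approx \operatorname{dist}(x, x')\}$.

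The last step converts the kernel information into the desired mixed-norm bound, exploiting the oscillatory structure in $t$ (to gain $\lambda^{-1/2}$ in $L^2_t$ via Schur's or Young's inequality) together with the wavefront concentration (to handle the $L^\infty_z$ side). For $d \geq 4$ this is equivalent to the Keel--Tao endpoint $(q, r) = (2, \infty)$ Strichartz estimate for the wave equation, yielding $\Lambda(d, \lambda)^2 = \lambda^{d-2}$. For $d = 3$ the endpoint is critical, and a careful analysis of the transition regime $|t - t'| \sim 1/\lambda$ produces a $\log\lambda$ loss. For $d = 2$ wave Strichartz is unavailable; one instead invokes Sogge's $L^4$ spectral cluster bound for surfaces, which after dualization gives the $\lambda^{1/2}$ bound. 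The principal obstacle is precisely this dimensional case analysis --- capturing the logarithmic loss at $d = 3$ and switching to the $L^4$-based mechanism at $d = 2$; the stationary phase step above is by now standard.
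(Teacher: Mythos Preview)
Your first three steps --- reducing to the frozen-$r$ operator $T_\lambda^r$, passing to $T_\lambda^r(T_\lambda^r)^*$, and applying stationary phase on $\mathbb{S}^{d-1}$ to obtain the kernel size bound $|K(x,x')|\lesssim \lambda^{d-1}(1+\lambda|x-x'|)^{-(d-1)/2}$ --- match the argument in \cite{BGT052} (their Lemma~2.7 records precisely this kernel bound). The present paper gives no proof of its own; it simply quotes the result.

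Your fourth step, however, has a genuine gap. Neither Keel--Tao nor Sogge's $L^4$ bound is what closes the argument, and the ``oscillatory structure in $t$'' and ``wavefront concentration'' you invoke are not features of this kernel --- only its \emph{size} matters. Keel--Tao would require a dispersive decay in $|t-t'|$ that is not present here, and Sogge's $L^4$ bound controls $L^4_{t,z}$, not $L^2_tL^\infty_z$; you have not explained any conversion. The actual step in \cite{BGT052} is a one-line Young/Schur inequality in $t$, valid uniformly for all $d\ge 2$: taking $\sup_{z,z'}$ in the kernel bound gives $\sup_{z,z'}|K((t,z),(t',z'))|\lesssim \lambda^{d-1}(1+\lambda|t-t'|)^{-(d-1)/2}$, and hence
\[
\|T_\lambda^r(T_\lambda^r)^*\|_{L^2_tL^1_z\to L^2_tL^\infty_z}\ \lesssim\ \lambda^{d-1}\int_{|s|\lesssim 1}(1+\lambda|s|)^{-(d-1)/2}\,ds.
\]
The right-hand integral is $\sim\lambda^{-1/2}$ for $d=2$, $\sim\lambda^{-1}\log\lambda$ for $d=3$, and $\sim\lambda^{-1}$ for $d\ge 4$, yielding exactly $\Lambda(d,\lambda)^2$ in every case. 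The dimensional trichotomy reflects only which power is being integrated, not a change of mechanism; in particular $d=2$ needs no special treatment.
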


\begin{lem}[Lemma 2.11 of \cite{BGT052}]\label{b2}
    Let $d=2$ and $x=(t,z)\in\mathbb{R}\times\mathbb{R}^{d-1}$ be any local system of coordinates near $(0,0)$. Then the operator 
$$f\in L^2(M)\mapsto T_\lambda f(t,z)\in L^4_tL^\infty_z(\mathbb{R}\times\mathbb{R}^{d-1})$$
is continuous with norm bounded by $\lesssim \lambda^{\frac{1}{4}}$. 
\end{lem}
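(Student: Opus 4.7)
The plan is to run a $TT^*$ argument on a fixed radial slice and reduce the $L^4_tL^\infty_z$ bound to a one-dimensional Hardy--Littlewood--Sobolev inequality in the $t$-variable. Freezing $r\in(\varepsilon/C,C\varepsilon)$ in \eqref{Tformula}, I set
$$T_\lambda^r g(t,z):=\int_{\mathbb{S}^{1}}e^{i\lambda\varphi_r(t,z,\omega)}a_r(t,z,\omega,\lambda)g(\omega)\,d\omega,$$
so that $T_\lambda f(t,z)=\lambda^{1/2}\int_r T_\lambda^r\bigl[f(\exp_0(r\,\cdot\,))\bigr](t,z)\,dr$. By Minkowski in $r$ followed by Cauchy--Schwarz (using that $dy=\kappa(r,\omega)\,dr\,d\omega$ with $\kappa$ bounded on the relevant annulus), it suffices to establish
$$\|T_\lambda^r g\|_{L^4_tL^\infty_z}\lesssim \lambda^{-1/4}\|g\|_{L^2(\mathbb{S}^{1})}$$
uniformly in $r$; restoring the $\lambda^{1/2}$ prefactor then yields the claimed $\lambda^{1/4}$.

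The first ingredient is the dispersive kernel estimate
$$|K(x,x')|\lesssim (1+\lambda|x-x'|)^{-1/2},\qquad K(x,x'):=\int_{\mathbb{S}^{1}}e^{i\lambda(\varphi_r(x,\omega)-\varphi_r(x',\omega))}a_r(x,\omega,\lambda)\bar{a}_r(x',\omega,\lambda)\,d\omega,$$
for the Schwartz kernel of $T_\lambda^r(T_\lambda^r)^{*}$. This is Lemma 2.7 of \cite{BGT052} and follows from one-dimensional stationary phase in $\omega$: for $\lambda|x-x'|\gtrsim 1$ there is a unique critical point at which $\partial_\omega^2(\varphi_r(x,\omega)-\varphi_r(x',\omega))\sim |x-x'|$, producing the claimed $\lambda^{-1/2}|x-x'|^{-1/2}$ decay. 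Dualizing $L^4_tL^\infty_z$ to $L^{4/3}_tL^1_z$ and applying $TT^*$,
$$\|T_\lambda^r\|^2_{L^2\to L^4_tL^\infty_z}=\|T_\lambda^r(T_\lambda^r)^{*}\|_{L^{4/3}_tL^1_z\to L^4_tL^\infty_z}.$$

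The decisive observation is that, because $|K(x,x')|$ depends on $(z,z')$ only through $|x-x'|$,
$$\sup_z(1+\lambda|(t,z)-(t',z')|)^{-1/2}=(1+\lambda|t-t'|)^{-1/2},$$
attained at $z=z'$, so the supremum is independent of $z'$. Passing $\sup_z$ inside the $(t',z')$-integral and integrating out $z'$ yields
$$\bigl\|T_\lambda^r(T_\lambda^r)^{*}h(t,\cdot)\bigr\|_{L^\infty_z}\lesssim \int(1+\lambda|t-t'|)^{-1/2}\|h(t',\cdot)\|_{L^1_{z'}}\,dt'.$$
The argument closes with the one-dimensional convolution bound $\|(1+\lambda|\cdot|)^{-1/2}*H\|_{L^4_t}\lesssim \lambda^{-1/2}\|H\|_{L^{4/3}_t}$, obtained by splitting the kernel into its near piece on $|t|\lesssim 1/\lambda$ (Young's inequality with the kernel in $L^2_t$) and the far piece of size $(\lambda|t|)^{-1/2}$ (one-dimensional HLS with the Sobolev relation $\tfrac14=\tfrac34-\tfrac12$).

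The main obstacle is controlling the $L^\infty_z$ norm: naive real interpolation between Lemma \ref{b1} (which gives $\lambda^{1/4}$ in $L^2_tL^\infty_z$) and the trivial $L^\infty_{t,z}$ bound of size $\lambda^{1/2}$ produces only $\lambda^{3/8}$, losing the factor $\lambda^{1/8}$ one must save in $d=2$. What unlocks the sharp exponent is the favorable diagonal structure of $|K(x,x')|$ in $(z,z')$: it allows the $\sup_z$ to be brought inside the $z'$-integral with no loss and effectively converts the $L^\infty_z$ norm into an $L^1_{z'}$ bound on $h$, collapsing the two-dimensional problem to a one-dimensional HLS inequality.
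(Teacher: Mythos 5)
Your proof is correct and is essentially the argument of Burq--G\'erard--Tzvetkov themselves: the paper here only cites Lemma 2.11 of \cite{BGT052} without reproducing its proof, and your reconstruction (fixing $r$, the $TT^*$ reduction with the kernel bound $|K(x,x')|\lesssim(1+\lambda|x-x'|)^{-1/2}$ from their Lemma 2.7, dominating $|x-x'|\geq|t-t'|$ to collapse to a one-dimensional kernel, and closing with Young/HLS to get the $\lambda^{-1/2}$ operator norm on $L^{4/3}_tL^1_z\to L^4_tL^\infty_z$) is exactly the intended route, with the exponents checking out ($\lambda^{1/2}\cdot\lambda^{-1/4}=\lambda^{1/4}$).
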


\begin{lem}[Lemma 2.12 of \cite{BGT052}]\label{b3}
    Let $d\geq 3$, $p>2$ and $x=(t,z)\in\mathbb{R}\times\mathbb{R}^{d-1}$ be any local system of coordinates near $(0,0)$. Then the operator 
$$f\in L^2(M)\mapsto T_\lambda f(t,z)\in L^p_tL^\infty_z(\mathbb{R}\times\mathbb{R}^{d-1})$$
is continuous with norm bounded by $\lesssim \lambda^{\frac{d-1}{2}-\frac{1}{p}}$. 
\end{lem}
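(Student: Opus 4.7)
The plan is a $TT^\ast$ argument that reduces the claimed estimate to Young's convolution inequality in the time variable. By duality,
\begin{equation*}
\|T_\lambda\|_{L^2(M)\to L^p_tL^\infty_z}^2 = \|T_\lambda T_\lambda^\ast\|_{L^{p'}_tL^1_z\to L^p_tL^\infty_z},
\end{equation*}
so it suffices to show $\|T_\lambda T_\lambda^\ast\|_{L^{p'}_tL^1_z\to L^p_tL^\infty_z} \lesssim \lambda^{(d-1)-2/p}$. Composing $T_\lambda$ with its adjoint, using \eqref{Tlambda} to rewrite the integration in the $y$-variable and then switching back to geodesic polar coordinates $y=\exp_0(r\omega)$ in the resulting $y$-integral, one obtains a kernel of the form
\begin{equation*}
K(x,x') = \lambda^{d-1}\int_{\varepsilon/C}^{C\varepsilon}\int_{\mathbb{S}^{d-1}} e^{i\lambda(\varphi_r(x,\omega)-\varphi_r(x',\omega))}b(x,x',r,\omega,\lambda)\,d\omega\,dr,
\end{equation*}
for a smooth, compactly supported amplitude $b$.

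For each fixed $r$ the inner integral over $\mathbb{S}^{d-1}$ is precisely of the type controlled in the proof of Lemma 2.7 of \cite{BGT052}: the phase $\omega\mapsto \varphi_r(x,\omega)-\varphi_r(x',\omega)$ has only non-degenerate critical points because $-\varphi_r(\cdot,\omega)$ is a geodesic distance function, and stationary phase yields $\bigl|\int_{\mathbb{S}^{d-1}}\cdots d\omega\bigr|\lesssim (1+\lambda|x-x'|)^{-(d-1)/2}$ uniformly in $r$. Since the $r$-range is bounded, integrating trivially gives
\begin{equation*}
|K(x,x')| \lesssim \frac{\lambda^{d-1}}{(1+\lambda|x-x'|)^{(d-1)/2}}.
\end{equation*}
Writing $x=(t,z)$ and $x'=(t',z')$, the right-hand side is monotone decreasing in $|x-x'|$ and is therefore maximized in the transverse variables at $z=z'$, where $|x-x'|=|t-t'|$. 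Consequently, for $g\in L^{p'}_tL^1_z$,
\begin{equation*}
\sup_{z}|T_\lambda T_\lambda^\ast g(t,z)| \lesssim \int \Phi_\lambda(t-t')\,\|g(t',\cdot)\|_{L^1_z}\,dt',\qquad \Phi_\lambda(s):=\frac{\lambda^{d-1}}{(1+\lambda|s|)^{(d-1)/2}}.
\end{equation*}

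Young's convolution inequality $L^{p/2}\ast L^{p'}\to L^p$ now gives $\|T_\lambda T_\lambda^\ast g\|_{L^p_tL^\infty_z} \lesssim \|\Phi_\lambda\|_{L^{p/2}_t}\|g\|_{L^{p'}_tL^1_z}$, and the change of variables $u=\lambda t$ produces $\|\Phi_\lambda\|_{L^{p/2}_t} \lesssim \lambda^{(d-1)-2/p}$ provided $p(d-1)/4>1$, i.e.\ $p>4/(d-1)$. Under the hypotheses $d\geq 3$ and $p>2$ this is automatic, and combining these bounds with the $TT^\ast$ identity yields the claim. The only genuinely nontrivial input is the kernel bound, which is the same stationary-phase estimate already invoked for Lemma \ref{b1}; the remainder is routine convolution analysis. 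The sharpness of the threshold $p>2$ is transparent in this argument, since at $p=2$ and $d=3$ the kernel $\Phi_\lambda$ just fails to be integrable, producing precisely the logarithmic loss that appears in $\Lambda(3,\lambda)$ in Lemma \ref{b1}.
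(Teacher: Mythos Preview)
Your proof is correct and follows essentially the same route as the original argument in \cite{BGT052}: a $TT^\ast$ reduction, the stationary-phase kernel bound of Lemma~2.7 there (the commented-out lemma just above Lemma~\ref{b1} in this paper), and then Young's inequality in the $t$ variable. The only cosmetic difference is that you work directly with $T_\lambda$ (including the outer $r$-integral) rather than with the fixed-$r$ operators $T_\lambda^r$ of \cite{BGT052}, which is exactly the repackaging the present paper adopts; your trivial integration in $r$ handles this cleanly.
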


\begin{lem}[Lemma 2.8, 2.9, 2.14 of \cite{BGT052}]\label{split}
Let $k\geq 0$ be a fixed integer. There exists a universal constant $\varrho>0$ depending only on $k$, such that for any $k+1$ spherical disks $W^1,\ldots,W^{k+1}$ all of radius $\varrho$ on the unit sphere $\mathbb{S}^{d-1}$, there exists a splitting $x=(t,z)\in\mathbb{R}\times\mathbb{R}^{d-1}$, $\varepsilon>0$, $\delta>0$, and a constant $c>0$, such that for $\varepsilon/C\leq r\leq C\varepsilon$, $|x|< \delta$, and $\omega\in\bigcup_{j=1}^{k+1} W^j$, the phase 
$\varphi_r(t,z,\omega)$ satisfies 
\begin{align}\label{nondeg}
    \left|\det_{i,j}\left(\frac{\partial^2\varphi_r(t,z,\omega)}{\partial z_j\partial\omega_i}\right)\right|\geq c.
\end{align}
Moreover, suppose the term $a_r(x,\omega,\lambda)$ in \eqref{Tformula} is supported in 
\begin{align*}
\{(x,\omega)\in\mathbb{R}^d\times\mathbb{S}^{d-1}: |x|< \delta, \ \omega\in \bigcup_{j=1}^{k+1} W^j\},
\end{align*}
then \eqref{nondeg} implies that the operator 
$$f\in L^2(M)\mapsto T_\lambda f(t,z)\in L^\infty_tL^2_z(\mathbb{R}\times\mathbb{R}^{d-1})$$
is continuous with norm bounded by $\lesssim 1$. 
\end{lem}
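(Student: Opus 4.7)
The plan is to split the lemma into two steps: (i) the geometric assertion (\ref{nondeg}), a uniform lower bound on the mixed Hessian $\partial_{z_j}\partial_{\omega_i}\varphi_r$ on the designated union of caps; and (ii) the analytic consequence that (\ref{nondeg}) yields the $L^2(M)\to L^\infty_tL^2_z$ continuity. I expect the nontrivial content to lie in step (i), namely the selection of the splitting $(t,z)$ adapted to the given caps. Step (ii), given (\ref{nondeg}), is a standard application of H\"ormander's $L^2$ boundedness theorem for oscillatory integral operators with nondegenerate mixed-Hessian phase.

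For step (i), I would work in the geodesic normal coordinates around $0$ in which $T_\lambda$ is defined. Since $-\varphi_r(x,\omega)$ is the geodesic distance from $x$ to $\exp_0(r\omega)$, a direct computation gives $\nabla_x\varphi_r(0,\omega) = \omega \in \mathbb{R}^d$. Choose a unit vector $\nu \in \mathbb{S}^{d-1}$ (the future $t$-axis) and complement it to an orthonormal basis $(\nu, e_1, \ldots, e_{d-1})$; set $z_j = x \cdot e_j$. Parametrize $\omega$ on each hemisphere $\{\pm \omega \cdot \nu > 0\}$ by its $z$-projection $\omega_z = (\omega \cdot e_1, \ldots, \omega \cdot e_{d-1})$; then at $x = 0$,
\begin{equation*}
\partial_{z_j}\partial_{\omega_{z,i}}\varphi_r(0,\omega) = \partial_{\omega_{z,i}}(\omega \cdot e_j) = \delta_{ij}.
\end{equation*}
Thus the mixed Hessian equals the identity at $x = 0$, uniformly in $r$ and in $\omega$, provided $\omega$ stays away from the equator $\{\omega \cdot \nu = 0\}$ so that the $\omega_z$-chart is smooth. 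Continuity in $x$ then propagates this lower bound to $|x| < \delta$ and $\varepsilon/C \le r \le C\varepsilon$ for $\delta, \varepsilon$ sufficiently small.

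The crux is to select $\nu$ so that $|\omega \cdot \nu| \geq c(k) > 0$ holds on $\bigcup_{j=1}^{k+1} W^j$. This is a measure-theoretic pigeonhole on the space of equators $\mathbb{S}^{d-1}/\{\pm 1\}$: the set of directions $\nu$ whose equator comes within angular distance $c$ of a given cap of radius $\varrho$ has measure $\lesssim \varrho + c$, so for $\varrho = \varrho(k)$ small enough an open set of admissible $\nu$'s exists for any configuration of $k+1$ caps; a compactness argument over all such configurations produces the uniform lower bound $c(k) > 0$. A minor subtlety is that caps on opposite hemispheres require different $\omega_z$-charts, but this only introduces a sign flip that does not affect the absolute value of the determinant.

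Finally, for step (ii), once (\ref{nondeg}) holds, for each fixed $t$ the operator
\begin{equation*}
f(\omega) \mapsto \int_{\mathbb{S}^{d-1}} e^{i\lambda\varphi_r(t,z,\omega)}a_r(t,z,\omega,\lambda) f(\omega) \, d\omega
\end{equation*}
maps $L^2(\mathbb{S}^{d-1}) \to L^2_z$ with norm $\lesssim \lambda^{-(d-1)/2}$ by H\"ormander's $L^2$ theorem for nondegenerate FIOs. Integrating in $r$ over the compact interval $[\varepsilon/C, C\varepsilon]$ via Minkowski and absorbing the prefactor $\lambda^{(d-1)/2}$ present in the definition of $T_\lambda$ yields $\|T_\lambda f(t,\cdot)\|_{L^2_z} \lesssim \|f\|_{L^2(M)}$ uniformly in $t$, which is exactly the asserted $L^\infty_t L^2_z$ bound. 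The principal obstacle throughout is producing the $k$-dependent but configuration-independent lower bound $c(k)$; this is precisely what the measure/compactness argument in step (i) is designed to secure, and it is what justifies the universal dependence $\varrho = \varrho(k)$ in the statement.
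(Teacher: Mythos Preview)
Your proposal is correct and follows the same route as \cite{BGT052}, which the paper cites rather than reproves; the only remark the paper adds is that the uniform radius $\varrho=\varrho(k)$ follows from the same argument as Lemma~2.9 there. Your two-step split---choosing the $t$-axis $\nu$ by a measure pigeonhole so that the equator $\{\omega\cdot\nu=0\}$ avoids all $k+1$ caps (giving the identity mixed Hessian at $x=0$ in the graph chart $\omega\mapsto\omega_z$), then invoking H\"ormander's nondegenerate $L^2$ oscillatory integral bound fiberwise in $(t,r)$---is exactly the content of Lemmas~2.8--2.9 and~2.14 in \cite{BGT052}.
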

Note that in the statements of operator norm bounds in all the above four lemmas, as compared with \cite{BGT052}, we replaced the operator $T_\lambda^r$ defined by the inner integral in \eqref{Tformula} by the original operator $T_\lambda$, which would better serves our exposition. Another change in the statement of Lemma \ref{split} is the seemingly stronger requirement that the neighborhoods $W^j$ ($j=1,\ldots,k+1$) have a uniform diameter, which follows from the same proof of the Lemma 2.9 in \cite{BGT052}. This change would ease the following partition-of-unity argument.  

\subsection{Partition of unity}

Now we move to the setting of Theorem~\ref{mljspe} and consider the product manifold $M_1\times\cdots\times M_r$. 
Write for each $i=1,\ldots,r$, $j=1,\ldots,k+1$, 
$$\chi_i(\sqrt{-\Delta_i}-\lambda^j_i)=T_{i,\lambda^j_i}+R_{i,\lambda^j_i}$$
as in Lemma \ref{param}.

For each $i=1,\ldots,r$, we apply Lemma \ref{split} to $M_i$, which provides a $\varrho_i>0$. We pick a finite cover $\mathcal{W}=\{W_i\}$ of the unit sphere $\mathbb{S}^{d_i-1}$, where each $W_i$ is a spherical disk of radius $\varrho_i$. Now at each point of $M_i$, 
for each $(k+1)$-combination $W_i^1,\ldots,W_i^{k+1}$ from $\mathcal{W}$,  
Lemma \ref{split} provides choices of: 
\begin{itemize}
\item a splitting $x_i=(t_i,z_i)\in\mathbb{R}\times\mathbb{R}^{d_i-1}$;
\item an $\varepsilon_i>0$; and
\item a $\delta_i>0$.
\end{itemize} 
Though the splitting depends on the choice of the particular combination $W_i^1,\ldots,W_i^{k+1}$ of spherical disks, we can make the choices of $\varepsilon_i$ and $\delta_i$ {\it uniform} with respect to these combinations, as there are only finitely many $(k+1)$-combinations from the finite cover $\{W_i\}$.  
We pick our cutoff functions $\chi_i$ to have Fourier support in $\{\varepsilon_i\leq \tau\leq 2\varepsilon_i\}$\footnote{By the proof of Lemma 2.2 of \cite{BGT052}, to prove Theorem~\ref{mljspe}, it suffices to prove it with a particular choice of the cutoff functions $\chi_i$.}.
Then we use the neighborhoods $\{|x_i|<\delta_i\}$ to cover $M_i$, and pick a finite cover $\mathcal{V}=\{V_i\}$. 
Let $\{\rho_{i,V_i}(x_i)\}$ be a partition of unity subordinate to $\{V_i\}$. Also let $\{\phi_{i,W_i}(\omega_i)\}$ be a partition of unity subordinate to $\{W_i\}$. 

Now we partition each $T_{i,\lambda^j_i}$ as a finite sum of local operators of the same form as \eqref{Tformula}:
\begin{align}\label{sumT}
T_{i,\lambda^j_i} f^j_i(x_i)=\sum_{V^j_i\in\mathcal{V},\ W^j_i\in\mathcal{W}}T^{V^j_i,W^j_i}_{i,\lambda^j_i} f^j_i(x_i),
\end{align}
where 
\begin{align}\label{Tdetail}
T_{i,\lambda^j_i}^{V^j_i,W^j_i} f^j_i(x_i)=\lambda^{\frac{d_i-1}{2}}\int_{\varepsilon_i/C_i}^{C_i\varepsilon_i} 
\int_{\mathbb{S}^{d_i-1}}e^{i\lambda^j_i\varphi_{i,r}(x_i,\omega_i)}
a_{i,r}^{V^j_i,W^j_i}(x_i,\omega_i,\lambda^j_i)f^j_i(\exp_{i,0}(r\omega_i))\ d\omega_i
\ dr.
\end{align}
with 
\begin{align}\label{adetail}
a_{i,r}^{V^j_i,W^j_i}(x_i,\omega_i,\lambda^j_i)
=\rho_{i,V^j_i}(x_i)\phi_{i,W^j_i}(\omega_i)a_{i,r}(x_i,\omega_i,\lambda^j_i).
\end{align}
At the same time, for the remainder term $R_{i,\lambda^j_i} f_i(x_i)$ we also write:
\begin{align}\label{sumR}
    R_{i,\lambda^j_i} f^j_i(x_i)=\sum_{V^j_i\in\mathcal{V}}R_{i,\lambda^j_i}^{V^j_i} f^j_i(x_i),
\end{align}
where 
\begin{align}\label{Rdetail}
R_{i,\lambda^j_i}^{V^j_i} f^j_i(x_i)=\rho_{i,V^j_i}(x_i) R_{i,\lambda^j_i} f^j_i(x_i).
\end{align}
We have put the subscript ``$i$'' to each of the involved variables to refer to the $i$-th manifold $M_i$, and also the superscript ``$j$'' to refer to the $j$-th spectral projector as in the multi-linear bound.

\begin{lem}\label{Aij}
For $W_i^1,\ldots,W_i^{k+1}\in\mathcal{W}$, let $(t_i,z_i)$ be the chosen splitting that charts the neighborhood $V_i^j\in\mathcal{V}$. 
Let $A_{i,j}$ be either $T^{V^j_i,W^j_i}_{i,\lambda^j_i}$ or $R_{i,\lambda^j_i}^{V^j_i}$. Then:\\
(1) $\|A_{i,j}\|_{L^2(M_i)\to L^2_{t_i}L^\infty_{z_i}(\mathbb{R}\times\mathbb{R}^{d_i-1})}\lesssim \Lambda(d_i,\lambda^j_i)$; \\
(2) For $d_i=2$, $\|A_{i,j}\|_{L^2(M_i)\to L^4_{t_i}L^\infty_{z_i}(\mathbb{R}\times\mathbb{R}^{d_i-1})}\lesssim (\lambda^j_i)^\frac14$; \\
(3) For $d_i\geq 3$, $p>2$, $\|A_{i,j}\|_{L^2(M_i)\to L^p_{t_i}L^\infty_{z_i}(\mathbb{R}\times\mathbb{R}^{d_i-1})}\lesssim (\lambda^j_i)^{\frac{d_i-1}{2}-\frac1p}$; \\
(4) $\|A_{i,j}\|_{L^2(M_i)\to L^\infty_{t_i}L^2_{z_i}(\mathbb{R}\times\mathbb{R}^{d_i-1})}\lesssim 1$.
\end{lem}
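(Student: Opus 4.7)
\textbf{Proof plan for Lemma \ref{Aij}.} The strategy is to handle the two cases $A_{i,j}=T^{V^j_i,W^j_i}_{i,\lambda^j_i}$ and $A_{i,j}=R^{V^j_i}_{i,\lambda^j_i}$ separately. For the oscillatory (main) term we reduce directly to the four operator bounds of Burq--G\'erard--Tzvetkov already quoted above; for the remainder term we use the high-order Sobolev bound \eqref{Rb} together with Sobolev embedding to obtain rapid decay in $\lambda_i^j$, which is far stronger than any of the stated estimates.

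\emph{The $T$-case.} Inspecting \eqref{Tdetail}--\eqref{adetail}, the localized operator $T^{V^j_i,W^j_i}_{i,\lambda^j_i}$ has exactly the form \eqref{Tformula} governing $T_\lambda$ in Lemma \ref{param}, the only difference being that the smooth amplitude is further multiplied by the cutoffs $\rho_{i,V^j_i}(x_i)\phi_{i,W^j_i}(\omega_i)$, both of which have bounded derivatives of all orders. Hence the $TT^*$-argument and stationary-phase analysis that underlie Lemmas \ref{b1}, \ref{b2}, \ref{b3} apply verbatim and yield parts (1), (2), (3). For part (4) I invoke Lemma \ref{split}: by construction, the splitting $x_i=(t_i,z_i)$, the parameter $\varepsilon_i$, and the radius $\delta_i$ were chosen uniformly over the (finitely many) $(k+1)$-tuples $W^1_i,\ldots,W^{k+1}_i$ so that the non-degeneracy \eqref{nondeg} holds throughout $\bigcup_{l}W^l_i$. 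Since $\phi_{i,W^j_i}$ is supported in $W^j_i\subset\bigcup_l W^l_i$, the hypothesis of Lemma \ref{split} is met and (4) follows.

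\emph{The $R$-case.} Here I simply use \eqref{Rb}: for any integers $k,N\geq 0$,
\begin{equation*}
\|R_{i,\lambda^j_i}f\|_{H^k(M_i)}\lesssim_{k,N}(\lambda^j_i)^{k-N}\|f\|_{L^2(M_i)}.
\end{equation*}
Picking $k>d_i/2$ and applying Sobolev embedding on $M_i$ gives
\begin{equation*}
\|R_{i,\lambda^j_i}f\|_{L^\infty(M_i)}\lesssim_N (\lambda^j_i)^{k-N}\|f\|_{L^2(M_i)},
\end{equation*}
and multiplication by $\rho_{i,V^j_i}$ only improves matters. Since the chart $V^j_i$ is relatively compact in $\mathbb{R}^{d_i}$, the $L^\infty$ bound trivially upgrades to any mixed norm $L^p_{t_i}L^q_{z_i}$ on the support, with a loss that is a universal constant. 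Choosing $N$ as large as needed defeats every polynomial factor of $\lambda^j_i$ appearing on the right-hand sides of (1)--(4), so all four bounds hold for the remainder operators with room to spare.

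\emph{Obstacles.} There is essentially no technical obstacle; the only point requiring a bit of care is the bookkeeping in (4), namely making sure that the single splitting $(t_i,z_i)$ associated with a fixed $(k+1)$-combination $W^1_i,\ldots,W^{k+1}_i$ is admissible for \emph{each} of the $T$-pieces $T^{V^j_i,W^j_i}_{i,\lambda^j_i}$ in that combination. This is guaranteed by the uniformity clause built into our application of Lemma \ref{split}, namely that $\varepsilon_i$, $\delta_i$ and the cover $\mathcal{V}$ are chosen independently of which combination of disks is under consideration.
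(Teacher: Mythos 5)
Your proposal is correct and follows essentially the same route as the paper: the main terms $T^{V^j_i,W^j_i}_{i,\lambda^j_i}$ inherit the bounds of Lemmas \ref{b1}--\ref{b3} directly and satisfy the support hypothesis of Lemma \ref{split} for part (4), while the remainder terms are handled by \eqref{Rb} together with Sobolev embedding, which gives rapid decay in $\lambda^j_i$ and hence beats every stated bound.
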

\begin{proof}
For $A_{i,j}=T^{V^j_i,W^j_i}_{i,\lambda^j_i}$ given in \eqref{Tdetail}, we have that the function $a_{i,r}^{V_i^j,W_i^j}(x_i,\omega_i,\lambda_i^j)$ as defined in \eqref{adetail} has support in  
    $$\{(x_i,\omega_i)\in\mathbb{R}^d_i\times\mathbb{S}^{d_i-1}: |x_i|< \delta_i, \ \omega_i\in \bigcup_{j=1}^{k+1} W_i^j\},$$
so we can apply the operator bound in Lemma \ref{split} to yield (iv). Lemma \ref{b1}, \ref{b2} and \ref{b3} apply to $T^{V^j_i,W^j_i}_{i,\lambda^j_i}$ also, so to yield (i)-(iii). For $A_{i,j}=R_{i,\lambda^j_i}^{V^j_i}$ given in \eqref{Rdetail}, 
with \eqref{Rb} in hand, all the bounds follow from Sobolev embedding! 
\end{proof}

\subsection{Multi-linear estimates}
We consider 
$$\prod_{j=1}^{k+1}\chi_{\lambda^j} f^j(x_1,\ldots,x_r)=\prod_{j=1}^{k+1}\prod_{i=1}^r \chi_{i}(\sqrt{-\Delta_i}-\lambda_i^j)f^j(x_1,\ldots,x_r).$$ 
By \eqref{sumT} and \eqref{sumR}, the above is now a finite sum of terms of the form 
$$\prod_{j=1}^{k+1}\prod_{i=1}^r A_{i,j} f^j(x_1,\ldots,x_r),$$
where the operator $A_{i,j}$ is either $T^{V^j_i,W^j_i}_{i,\lambda^j_i}$ or $R^{V^j_i}_{i,\lambda^j_i}$. Let $(t_i,z_i)$ be the splitting associated to $W_i^1,\ldots,W_i^{k+1}$.

It suffices to get the same bound for 
$\left\|\prod_{j=1}^{k+1}\prod_{i=1}^r A_{i,j} f^j\right\|_{L^2(M_1\times\cdots\times M_r)}$. 
By H\"older's inequality, we have 
\begin{align*}
   \left\|\prod_{j=1}^{k+1}\prod_{i=1}^r A_{i,j} f^j\right\|_{L^2(M_1\times\cdots\times M_r)}
   =   \left\|\prod_{j=1}^{k+1}\prod_{i=1}^r A_{i,j} f^j\right\|_{L^2(V^j_1\times\cdots\times V^j_r)}
   \leq \prod_{j=1}^{k+1}\left\|\prod_{i=1}^r A_{i,j} f^j\right\|_{ L_{t_1}^{p_{1,j}}L_{z_1}^{q_{1,j}}\cdots L_{t_r}^{p_{r,j}}L_{z_r}^{q_{r,j}}}
\end{align*}
with $\sum_{j=1}^{k+1}1/p_{i,j}=\sum_{j=1}^{k+1}1/q_{i,j}=2$, $p_{i,j},q_{i,j}\geq 2$, $i=1,\ldots,r$, $j=1,\ldots,k+1$.  
Note the commutativity among the operators 
$A_{i,j}$ ($i=1,\ldots,r$ with $j$ fixed), and apply Minkowski's inequality, 
we have 
\begin{align*}
&\left\|\prod_{i=1}^r A_{i,j} f^j\right\|_{ L_{t_1}^{p_{1,j}}L_{z_1}^{q_{1,j}}\cdots L_{t_r}^{p_{r,j}}L_{z_r}^{q_{r,j}}}\\
&=\left\|\left\|A_{r,j}\prod_{i=1}^{r-1} A_{i,j} f^j\right\|_{ L_{t_r}^{p_{r,j}}L_{z_r}^{q_{r,j}}} \right\|_{ L_{t_1}^{p_{1,j}}L_{z_1}^{q_{1,j}}\cdots L_{t_{r-1}}^{p_{{r-1},j}}L_{z_{r-1}}^{q_{{r-1},j}}}\\
&\leq \|A_{r,j}\|_{L^2(M_r)\to L_{t_r}^{p_{r,j}}L_{z_r}^{q_{r,j}}}\left\|\left\|\prod_{i=1}^{r-1} A_{i,j} f^j\right\|_{L^2(M_r)} \right\|_{ L_{t_1}^{p_{1,j}}L_{z_1}^{q_{1,j}}\cdots L_{t_{r-1}}^{p_{{r-1},j}}L_{z_{r-1}}^{q_{{r-1},j}}}\\
&\leq \|A_{r,j}\|_{L^2(M_r)\to L_{t_r}^{p_{r,j}}L_{z_r}^{q_{r,j}}}\left\|\left\|\left\|\prod_{i=1}^{r-1} A_{i,j} f^j\right\|_{L_{t_{r-1}}^{p_{{r-1},j}}L_{z_{r-1}}^{q_{{r-1},j}}}\right\|_{L^2(M_r)} \right\|_{ L_{t_1}^{p_{1,j}}L_{z_1}^{q_{1,j}}\cdots L_{t_{r-2}}^{p_{{r-2},j}}L_{z_{r-2}}^{q_{{r-2},j}}}.
\end{align*}
Inductively, we arrive at 
\begin{align}\label{mpmi}
\left\|\prod_{i=1}^r A_{i,j} f^j\right\|_{ L_{t_1}^{p_{1,j}}L_{z_1}^{q_{1,j}}\cdots L_{t_r}^{p_{r,j}}L_{z_r}^{q_{r,j}}}
\leq \prod_{i=1}^r \|A_{i,j}\|_{L^2(M_i)\to L_{t_i}^{p_{i,j}}L_{z_i}^{q_{i,j}}}\|f^j\|_{L^2(M_1\times\cdots\times M_r)}.
\end{align}
Thus 
\begin{align*}
   \left\|\prod_{j=1}^{k+1}\prod_{i=1}^r A_{i,j} f^j\right\|_{L^2(M_1\times\cdots\times M_r)}
   \leq \left(\prod_{j=1}^{k+1}\prod_{i=1}^r \|A_{i,j}\|_{L^2(M_i)\to L_{t_i}^{p_{i,j}}L_{z_i}^{q_{i,j}}}\right)\cdot\left(\prod_{j=1}^{k+1}\|f^j\|_{L^2(M_1\times\cdots\times M_r)}\right).   
\end{align*}
Now assume $N_1\geq\cdots\geq N_{k+1}\geq 1$, where $N_j=\sqrt{(\lambda^j_1)^2+\cdots+(\lambda^j_r)^2}$. Of course $\lambda^j_i\leq N_j$ for all $i,j$. 
It suffices to evaluate the above bound numerically:\\
(i) For $k=1$, put $(p_{i,1},q_{i,1})=(\infty,2)$ and $(p_{i,2},q_{i,2})=(2,\infty)$, $i=1,\ldots,r$, and use (1) and (4) of Lemma \ref{Aij}, we get the desired bound in Theorem~\ref{mljspe}.\\
(ii) For $k\geq 2$, again first put $(p_{i,1},q_{i,1})=(\infty,2)$, $i=1,\ldots,r$. 
\begin{itemize}
    \item If $d_i=2$, put $(p_{i,2},q_{i,2})=(p_{i,3},q_{i,3})=(4,\infty)$, and $(p_{i,j},q_{i,j})=(\infty,\infty)$ for $j\geq 4$;
    \item If $d_i=3$, for any small number $\eta>0$, put $(p_{i,2},q_{i,2})=(1/(1/2-\eta),\infty)$,  $(p_{i,3},q_{i,3})=(1/\eta,\infty)$, 
    and $(p_{i,j},q_{i,j})=(\infty,\infty)$ for $j\geq 4$; 
    \item If $d_i=4$, put $(p_{i,2},q_{i,2})=(2,\infty)$, and $(p_{i,j},q_{i,j})=(\infty,\infty)$ for $j\geq 3$. 
\end{itemize}
Then using all the parts of Lemma \ref{Aij}, we also get the desired bound in Theorem~\ref{mljspe}. This finishes the proof of Theorem~\ref{mljspe}.

\section{Proof of Theorem~\ref{mls}}\label{mlstr}
We prove a more general version of Theorem~\ref{mls}, replacing the sphere factors by more general compact Riemannian manifolds $M$ which satisfy: 
\begin{enumerate}
\item[(A1)] \label{a1} \hypertarget{a1}{} There exist $a\in\mathbb{Q}_{<0}$ and $b\in\mathbb{Q}$, such that the eigenvalues of the Laplace--Beltrami operator $\Delta$ constitute (a subset of) $\{an(n+b): n\in\mathbb{Z}_{\geq 0}\}$;
\item[(A2)] \label{a2} \hypertarget{a2}{}   Suppose $-\Delta f=N^2 f$, $-\Delta g=M^2 g$, $-\Delta h=L^2 h$, and $N, M, L\geq 0$. There exists a constant $C>0$ depending only on the underlying manifold $M$, such that $fg$ is orthogonal to $h$ in $L^2(M)$ whenever $L\notin [N-CM,N+CM]$.
\end{enumerate}
Typical examples satisfying the above assumptions are rank-one compact symmetric spaces of compact type, namely, spheres, complex projective spaces, quaternionic projective spaces, octonionic projective spaces, and their finite quotients (see Chapter III \S9 of \cite{Hel08}). 

Let $M=M_1\times\cdots\times M_{r_0}\times\mathbb{T}^{r_1}$, where each $M_i$ satisfies the above two assumptions. 
For every $f\in L^2(M)$, we can write 
\begin{align}\label{sd}
f(x_0,x_1)=\sum_{\substack{\xi_0=(n_1,\ldots,n_{r_0})\in\mathbb{Z}_{\geq 0}^{r_0} \\ \xi_1=(n_{r_0+1},\ldots,n_{r})\in\mathbb{Z}^{r_1}} } f_{\xi}(x_0)e^{i\langle x_1, \xi_1\rangle},
\end{align}
where $x_0\in M_1\times\cdots\times M_{r_0}$, $x_1=(y_{r_0+1},\ldots,y_r)\in \mathbb{T}^{r_1}$, 
$\xi=(\xi_0,\xi_1)=(n_1,\ldots,n_r)\in \mathbb{Z}_{\geq 0}^{r_0}\times\mathbb{Z}^{r_1}$, 
and $f_{\xi}$ is a joint eigenfunction of the $\Delta_i$'s, $i=1,\ldots,r_0$. Without loss of generality, due to the nature of the multi-linear Strichartz estimate, we may simply assume that all eigenvalues of the Laplace--Beltrami operator $\Delta_i$ on $M_i$ belong to the set $\{-n^2: n\in\mathbb{Z}_{\geq 0}\}$. Then we can assume $-\Delta_i f_{\xi}=n_i^2 f_{\xi}$ for all $i=1,\ldots,r_0$. With $\Delta_i=\partial^2_{y_{i}}$ for $i=r_0+1,\ldots,r$, and $\Delta=\sum_{i=1}^{r}\Delta_i$, we have 
$$\Delta\left( f_{\xi}(x_0)e^{i\langle x_1, \xi_1\rangle}\right)=-|\xi|^2 \cdot f_{\xi}(x_0)e^{i\langle x_1, \xi_1\rangle},$$
so that 
\begin{align}\label{linearsum}
e^{it\Delta}f(x_0,x_1)=\sum_{\xi\in \mathbb{Z}_{\geq 0}^{r_0}\times\mathbb{Z}^{r_1} }e^{-it|\xi|^2} f_{\xi}(x_0)e^{i\langle x_1, \xi_1\rangle}.
\end{align}

Under the assumption of Theorem~\ref{mls}, following \cite{HTT11}, we first use \hyperlink{a2}{(A2)} to localize the spatial frequencies of $f^1$. Let $\mathcal{J}$ denote the collection of intervals of the form $[(m-1)N_2,mN_2)$, $m\in\mathbb{Z}$. For ${\bf J}=J_1\times\cdots\times J_r\in \mathcal{J}^r$, and for $f\in L^2(M)$ given in \eqref{sd}, let $P_{\bf J}$ be the spectral projector defined by 
$$P_{\bf J} f:= \sum_{\xi\in{\bf J}} f_{\xi}(x_0)e^{i\langle x_1, \xi_1\rangle}.$$ 
Given ${\bf J}=J_1\times\ldots\times J_r\in \mathcal{J}^r$ and ${\bf J}'=J'_1\times\ldots\times J'_r\in \mathcal{J}^r$, because of \hyperlink{a2}{(A2)}, there exists a constant $C>0$ depending only on the underlying manifolds $M_i$, such that whenever we have an $i\in\{1,\ldots,r\}$ with the property that the two intervals $J_i,J'_i$ are of a distance at least $C\cdot N_2$ away, then $(e^{it\Delta}P_{\bf J}f^1) (e^{it\Delta}f^2)\cdots (e^{it\Delta}f^{k+1})$ is orthogonal to $(e^{it\Delta}P_{\bf J'}f^1) (e^{it\Delta}f^2)\cdots (e^{it\Delta}f^{k+1})$ in $L^2(M_i)$ and thus also in $L^2([0,2\pi]\times M)$.
This reduces the desired inequalities in Theorem~\ref{mls} to those with $f^1$ replaced by a $P_{\bf J} f^1$. 

A second frequency localization is needed in order to treat the critical regularity. Let $\xi^0$ denote the center of the cube ${\bf J}$. Following \cite{HTT11}, let $M=\max\{{N_2^2}/{N_1},1\}$, and decompose ${\bf J}$ into slabs: 
${\bf J}=\bigcup_{m\in\mathbb{Z}} K_m$, where 
\begin{align}\label{slab}
K_m:=\{\xi\in {\bf J}: \langle\xi,\xi_0\rangle/|\xi_0|\in[(m-1)M,m M)\}.
\end{align}
For $f\in L^2(M)$ given by \eqref{sd}, consider the spectral projector defined by 
$$P_{K_m}f:= \sum_{\xi\in K_m} f_{\xi}(x_0)e^{i\langle x_1, \xi_1\rangle}$$ 
so that $P_{\bf J}=\sum_{m\in\mathbb{Z}} P_{K_m}$. 
Using the fact that $f^1$ is spectrally localized in $[N_1, 2N_1]$ with respect to $\sqrt{-\Delta}$, and ${\bf J}$ is a cube of side length $N_2$, a standard computation (as in the proof of Proposition 3.5 in \cite{HTT11}) shows that there exists a universal constant $C>0$ such that if $|m-m'|>C$, then $(e^{it\Delta}P_{ K_m}f^1) (e^{it\Delta}f^2)\cdots (e^{it\Delta}f^{k+1})$ is orthogonal to $(e^{it\Delta}P_{K_{m'}}f^1) (e^{it\Delta}f^2)\cdots (e^{it\Delta}f^{k+1})$ in $L^2_t([0,2\pi])$ and thus also in $L^2([0,2\pi]\times M)$. This further reduces the desired inequalities in Theorem~\ref{mls} to those with $f^1$ replaced by a $P_{K_m} f^1$. 

Write for each $j=1,\ldots,k+1$,
\begin{align*}
f^j(x_0,x_1)=\sum_{\xi^j=(\xi^j_0,\xi^j_1)\in\mathbb{Z}_{\geq 0}^{r_0}\times \mathbb{Z}^{r_1} } f^j_{\xi^j}(x_0)e^{i\langle x_1, \xi^j_1\rangle}.
\end{align*}
Then 
\begin{align*}
    (e^{it\Delta}P_{K_m}f^1\prod_{j=2}^{k+1}e^{it\Delta}f^j) (x_0,x_1)
    =\sum_{\substack{\xi^j\in \mathbb{Z}_{\geq 0}^{r_0}\times \mathbb{Z}^{r_1},\ j=1,\ldots,k+1 \\ \xi^1\in K_m}} e^{-it\sum_{j=1}^{k+1}|\xi^j|^2+i\langle x_1,\sum_{j=1}^{k+1}\xi_1^j\rangle}\prod_{j=1}^{k+1}f^j_{\xi^j}(x_0)
\end{align*}
First, we have
\begin{align*}
    \|e^{it\Delta}P_{K_m}f^1\prod_{j=2}^{k+1}e^{it\Delta}f^j\|_{L^2_{t,x_1}([0,2\pi]\times \mathbb{T}^{r_1})}^2
    =\sum_{l\in\mathbb{Z},\ \mu\in \mathbb{Z}^{r_1}}\left|\sum_{\substack{ \sum_{j=1}^{k+1}|\xi^j|^2= l \\ \sum_{j=1}^{k+1}\xi_1^j=\mu}}\prod_{j=1}^{k+1}f^j_{\xi^j}(x_0)\right|^2.
\end{align*}
Then by Minkowski's inequality in $L^2$, we have 
\begin{align*}
    \|e^{it\Delta}P_{K_m}f^1\prod_{j=2}^{k+1}e^{it\Delta}f^j\|_{L^2([0,2\pi]\times M)}^2
    \leq \sum_{l\in\mathbb{Z},\ \mu\in \mathbb{Z}^{r_1}} \left(\sum_{\substack{ \sum_{j=1}^{k+1}|\xi^j|^2= l \\ \sum_{j=1}^{k+1}\xi_1^j=\mu}}\left\|\prod_{j=1}^{k+1}f^j_{\xi^j}(x_0)\right\|_{L^2(M_1\times \cdots\times M_{r_0})}\right)^2.
\end{align*}
As each $f^j_{\xi^j}$ is a joint eigenfunction of the Laplace--Beltrami operators $\Delta_i$ ($i=1,\ldots,r_0$) with the joint spectrum $\xi^j_0=(n^j_1,\ldots,n^j_{r_0})$, we have $$\chi_{\xi^j_0}f^j_{\xi^j}=f^j_{\xi^j},$$
where $\chi_{\xi^j_0}$ is a joint spectral projector for the product manifold $M_1\times\cdots\times M_{r_0}$ as defined in Theorem~\ref{mljspe}. Apply Theorem~\ref{mljspe}, noting that $|\xi^j_0|\leq|\xi^j|=N_j$, $j=1,\ldots,k+1$, 
we have 
$$\left\|\prod_{j=1}^{k+1}f^j_{\xi^j}\right\|_{L^2(M_0)}\lesssim C(N_1,\ldots,N_{k+1})\prod_{j=1}^{k+1}\|f^j_{\xi^j}\|_{L^2(M_0)},$$
where 
\begin{align}\label{CM0}
C(N_1,\ldots,N_{k+1})
=\left\{
\begin{array}{ll}
   N_2^{\frac{d_0-2r_0}{2}+\frac{r_2}4}(\log N_2)^{\frac{r_3}2},  & \text{ if }k=1, \\
   N_2^{\frac{d_0-2r_0}{2}+\frac{r_2}4+r_3\eta}N_3^{\frac{d_0-r_0}{2}-\frac{r_2}4-r_3\eta}\prod_{j=4}^{k+1}N_j^{\frac{d_0-r_0}{2}},  & \text{ if }k\geq 2,
\end{array}
\right.
\end{align}
where $\eta$ is any positive number. Here we used $d_0$ to denote the dimension of $M_0$. 
Plug the above inequality into the further above one, we get 
\begin{align*}
    &\|e^{it\Delta}P_{K_m}f^1\prod_{j=2}^{k+1}e^{it\Delta}f^j\|_{L^2([0,2\pi]\times M)}
    \lesssim C(N_1,\ldots,N_{k+1}) \left(\sum_{l\in\mathbb{Z},\ \mu\in \mathbb{Z}^{r_1}} 
    \left(\sum_{\substack{ \sum_{j=1}^{k+1}|\xi^j|^2= l \\ \sum_{j=1}^{k+1}\xi_1^j=\mu}}\prod_{j=1}^{k+1}
    \|f^j_{\xi^j}\|_{L^2(M_0)}\right)^2\right)^{\frac12}\\
&\lesssim C(N_1,\ldots,N_{k+1})
\left\|
\sum_{\xi^1\in K_m} e^{-it|\xi^1|^2+i\langle x_1, \xi^1_1\rangle}\|f^1_{\xi^1}\|_{L^2(M_0)}\prod_{j=2}^{k+1}\sum_{|\xi^j|\in[N_j,2N_j]}e^{-it|\xi^j|^2+i\langle x_1, \xi^j_1\rangle} \|f^j_{\xi^j}\|_{L^2(M_0)}
\right\|_{L^2_{t,x_1}} \\
&\lesssim C(N_1,\ldots,N_{k+1})
\left\|
\sum_{\xi^1\in K_m} e^{-it|\xi^1|^2+i\langle x_1, \xi^1_1\rangle}\|f^1_{\xi^1}\|_{L^2(M_0)}\right\|_{L^{p_1} _{t,x_1}}\prod_{j=2}^{k+1} \left\| \sum_{|\xi^j|\in[N_j,2N_j]}e^{-it|\xi^j|^2+i\langle x_1, \xi^j_1\rangle} \|f^j_{\xi^j}\|_{L^2(M_0)}
\right\|_{L^{p_j} _{t,x_1}}  
\end{align*}
where we used H\"older's inequality with $p_j\geq 2$, $\sum_{j=1}^{k+1}1/p_j=1/2$. Now:\\
(1) For $k=1$, $r\geq 3$ or $r=r_0=2$, we put $p_1=p_2=4$, and use both \eqref{i1} and \eqref{i2} of Lemma \ref{e1} below to get: for some $\delta_0>0$, 
$$\|e^{it\Delta}P_{K_m}f^1 \cdot e^{it\Delta}f^2\|_{L^2([0,2\pi]\times M)}\lesssim C(N_1,N_2)\left(\frac{N_2}{N_1}+\frac{1}{N_2}\right)^{\delta_0}N_2^{r-\frac{2+r_1}{2}}\|f^1\|_{L^2(M)}\|f^2\|_{L^2(M)}.$$
(2) For $k=1$, $r=2$, $r_0=0,1$, we also put $p_1=p_2=4$, and use \eqref{i1} of Lemma \ref{e1} to get: for all $\varepsilon>0$, 
$$\|e^{it\Delta}P_{K_m}f^1 \cdot e^{it\Delta}f^2\|_{L^2([0,2\pi]\times M)}\lesssim C(N_1,N_2)N_2^{r-\frac{2+r_1}{2}+\varepsilon}\|f^1\|_{L^2(M)}\|f^2\|_{L^2(M)}.$$
(3) For $k\geq 2$, $r\geq 3$, we put $p_1=p_2=4$, $p_j=\infty$ for $j\geq 3$, and use both \eqref{i1} and \eqref{i2} of  Lemma \ref{e1} to get: there exists  $\delta_0>0$ such that for all $\delta\in[0,\delta_0)$,

\begin{align*}
&\|e^{it\Delta}P_{K_m}f^1\prod_{j=2}^{k+1}e^{it\Delta}f^j\|_{L^2([0,2\pi]\times M)}\\
    &\lesssim C(N_1,\ldots,N_{k+1})
    \left(\frac{N_2}{N_1}+\frac{1}{N_2}\right)^{\delta_0}N_2^{r-\frac{2+r_1}{2}}\prod_{j=3}^{k+1}N_j^{\frac{r}{2}}\prod_{j=1}^{k+1}\|f^j\|_{L^2(M)}\\
    &\lesssim C(N_1,\ldots,N_{k+1})
    \left(\frac{N_{k+1}}{N_1}+\frac{1}{N_2}\right)^{\delta}N_2^{r-\frac{2+r_1}{2}+\delta(k-1)}\prod_{j=3}^{k+1}N_j^{\frac{r}{2}-\delta}\prod_{j=1}^{k+1}\|f^j\|_{L^2(M)}.
\end{align*}
(4) For $k\geq 2$, $r=2$, for any small $\varepsilon>0$, we put $p_3=(2+r_1)/\varepsilon$ and $p_j=\infty$ for all $j\geq 4$, and $p_1=p_2=2/(\frac12-\frac{\varepsilon}{2+r_1})>4$. Using both \eqref{i1} and \eqref{i2} of Lemma \ref{e1}, there exists $\delta_0>0$, such that for all $\delta\in[0,\delta_0)$, 
\begin{align*}
&\|e^{it\Delta}P_{K_m}f^1\prod_{j=2}^{k+1}e^{it\Delta}f^j\|_{L^2([0,2\pi]\times M)}\\
   & \lesssim C(N_1,\ldots,N_{k+1})
    \left(\frac{N_2}{N_1}+\frac{1}{N_2}\right)^{\delta_0}N_2^{r-\frac{2+r_1}{2}+\varepsilon}N_3^{\frac{r}{2}-\varepsilon} \prod_{j=4}^{k+1}N_j^{\frac{r}{2}}\prod_{j=1}^{k+1}\|f^j\|_{L^2(M)}\\
    &\lesssim C(N_1,\ldots,N_{k+1})
    \left(\frac{N_{k+1}}{N_1}+\frac{1}{N_2}\right)^{\delta}N_2^{r-\frac{2+r_1}{2}+\varepsilon+\delta(k-1)}N_3^{\frac{r}{2}-\varepsilon-\delta} \prod_{j=4}^{k+1}N_j^{\frac{r}{2}-\delta}\prod_{j=1}^{k+1}\|f^j\|_{L^2(M)}.
\end{align*}
Combined with \eqref{CM0}, we see that (1) \& (2) of the above yield (i), (ii) and (iii) of Theorem~\ref{mls}, (3) yields (iv), and (4) yields (v).

\begin{lem}\label{e1}
Let $r_0\geq 1$ and $r_1\geq 0$ be integers and let $r=r_0+r_1$. let 
$$p_0=\left\{\begin{array}{ll} 
2, &\text{ if }r=r_0\geq 2, \\
\frac{2(r+2)}{r}, & \text{ otherwise}.
\end{array}\right.$$
Assume $p>p_0$. \\
(i) For $N\geq 1$, ${\bf b}\in\mathbb{R}^r$, let ${\bf J}_{{\bf b},N}={\bf b}+[0,N]^r$. 
Then
\begin{align}\label{i1}
\left\|
\sum_{\xi\in {\bf J}_{{\bf b},N}} e^{-it|\xi|^2+i\langle x_1, \xi_1\rangle}a_{\xi}\right\|_{L^{p} _{t,x_1}([0,2\pi]^{1+r_1})}
\lesssim N^{\frac{r}{2}-\frac{2+r_1}{p}}\|a_{\xi}\|_{l^2_{\xi}},
\end{align}
uniformly in ${\bf b}$. \\
(ii) For $N_1\geq N_2\geq 1$, ${\bf b}\in\mathbb{R}^r$, $m\in\mathbb{Z}$, let ${\bf J}={\bf b}+[0,N_2]^r$, and let $K_m$ be a slab in ${\bf J}$ as defined in \eqref{slab}. Then there exists $\delta_0=\delta_0(p)>0$ such that
\begin{align}\label{i2}
\left\|
\sum_{\xi\in K_m} e^{-it|\xi|^2+i\langle x_1, \xi_1\rangle}a_{\xi}\right\|_{L^{p}_{t,x_1}([0,2\pi]^{1+r_1})}
\lesssim \left(\frac{N_2}{N_1}+\frac{1}{N_2}\right)^{\delta_0}N_2^{\frac{r}{2}-\frac{2+r_1}{p}}\|a_{\xi}\|_{l^2_{\xi}},
\end{align}
uniformly in ${\bf b}$ and $m$. \\
\end{lem}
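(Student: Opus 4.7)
The plan is to reduce both parts to a combination of Bourgain--Demeter's $l^2$-decoupling theorem on a lifted torus and Bernstein's inequality. For Part~(i), if $r_1=0$ then $r=r_0$ and the estimate coincides verbatim with Lemma~\ref{exp}, so nothing more is needed. If $r_1\geq 1$, I would introduce a fictitious variable $x_0\in\T^{r_0}$ and consider the lifted Schr\"odinger solution
\[
\tilde F(t,x_0,x_1) := \sum_{\xi\in {\bf J}_{{\bf b},N}\cap\Z^r} a_\xi\, e^{-it|\xi|^2 + i\l x_0,\xi_0\r + i\l x_1,\xi_1\r}
\]
on $\T^{1+r}$, whose Fourier support is a cube of side $N$. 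Bourgain--Demeter gives
\[
\|\tilde F\|_{L^p(\T^{1+r})} \lesssim N^{r/2-(r+2)/p}\|a\|_{l^2} \qquad \text{for } p>2(r+2)/r = p_0,
\]
and observing $F(t,x_1)=\tilde F(t,0,x_1)$, Bernstein's inequality on $\T^{r_0}$ yields $|F(t,x_1)|\lesssim N^{r_0/p}\|\tilde F(t,\cdot,x_1)\|_{L^p_{x_0}}$, since at fixed $(t,x_1)$ the map $x_0\mapsto \tilde F(t,x_0,x_1)$ is a trigonometric polynomial with spectrum in a cube of side $N$ in $\Z^{r_0}$. Raising to the $p$-th power and integrating in $(t,x_1)$:
\[
\|F\|_{L^p_{t,x_1}} \lesssim N^{r_0/p}\|\tilde F\|_{L^p(\T^{1+r})} \lesssim N^{r/2-(r_1+2)/p}\|a\|_{l^2},
\]
which is exactly the claim.

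For Part~(ii), the same scheme is applied to the rectangular slab $K_m\subset {\bf J}$, which is a tube of thickness $M := \max\{N_2^2/N_1,1\}$ along the cube-center direction and of side $\sim N_2$ in each of the $r-1$ transverse directions. Partitioning $K_m$ into sub-cubes of side $M$ and invoking BD's $l^2$-decoupling on the lifted torus gives
\[
\|\tilde F_{K_m}\|_{L^p(\T^{1+r})} \lesssim M^{r/2-(r+2)/p}\|a\|_{l^2}
\]
(with a possible sub-polynomial $(N_2/M)^\varepsilon$ loss that is absorbed into the final $\delta_0$). Combined with the same Bernstein step (worst-case factor $N_2^{r_0/p}$, since the $\xi_0$-projection of $K_m$ can be as large as a cube of side $N_2$), one gets
\[
\|F\|_{L^p_{t,x_1}} \lesssim N_2^{r_0/p}\,M^{r/2-(r+2)/p}\|a\|_{l^2} = \left(\frac{M}{N_2}\right)^{r/2-(r+2)/p} N_2^{r/2-(r_1+2)/p}\|a\|_{l^2}.
\]
Since $M/N_2\leq N_2/N_1+1/N_2$ and the exponent $\delta_0:=r/2-(r+2)/p>0$ whenever $p>p_0$, this is the required bound.

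The main technical obstacle is the $l^2$-decoupling step in Part~(ii) on the rectangular slab $K_m$: while BD's theorem is most familiar for cubical caps of the paraboloid, the rectangular-tube decoupling (into sub-cubes of side $M$) goes through with a benign sub-polynomial loss that can be absorbed into $\delta_0$, but one should carefully verify that the decoupling constants degrade only by a factor of $(N_2/M)^\varepsilon$ as the tube geometry becomes elongated. Endpoint cases (e.g.\ $r_0=2$, $r_1=0$, $p=4$) additionally inherit the $\varepsilon$-loss of Bourgain's $L^4$ estimate, which is consistent with the $\varepsilon$ appearing in Theorem~\ref{mls}(ii); elsewhere the argument is a direct application of BD (for $p>2(r+2)/r$) and Bernstein.
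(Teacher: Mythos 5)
Your part (i) is essentially the paper's own argument: lift to $\T^{1+r}$ by introducing the fictitious variable $x_0$, apply the discrete Strichartz/decoupling estimate of Bourgain--Demeter on a frequency cube of side $N$ (which is translation-invariant in frequency, giving the uniformity in ${\bf b}$), and remove $x_0$ by Bernstein at cost $N^{r_0/p}$, with the remaining case $r=r_0\ge 2$, $2<p\le \frac{2(r+2)}{r}$ delegated to Lemma \ref{exp}. That part is fine.

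Part (ii), however, has a genuine gap. Your scheme (decouple $K_m$ into cubes of side $M$, estimate each cube by $M^{\frac r2-\frac{r+2}{p}}\|a_\tau\|_{l^2}$, recombine) produces the gain $(M/N_2)^{\delta_0}$ only with $\delta_0=\frac r2-\frac{r+2}{p}$, which is positive only for $p>\frac{2(r+2)}{r}$. But the lemma asserts a positive $\delta_0$ for \emph{all} $p>p_0$, and when $r=r_0\ge 2$ one has $p_0=2$, so the range $2<p\le\frac{2(r+2)}{r}$ is part of the claim; there your $\delta_0\le 0$, and below the critical decoupling exponent the per-cap estimate carries no negative power of $M$ that could be converted into a gain in $M/N_2$. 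This is not a fringe case: the proof of Theorem \ref{mls}(iii) (cubic NLS on $\mathbb{S}^{d_1}\times\mathbb{S}^{d_2}$ at critical regularity) invokes \eqref{i2} precisely with $r=r_0=2$ and $p=4=\frac{2(r+2)}{r}$, where your exponent vanishes. Your closing remark that the case $(r_0,r_1,p)=(2,0,4)$ ``inherits an $\varepsilon$-loss'' misdiagnoses the situation: the point of Lemma \ref{exp} is that there is no loss there, and \eqref{i2} requires a strictly positive $\delta_0$, not a loss. The paper's proof avoids decoupling the slab altogether: it bounds the slab sum trivially in $L^\infty_{t,x_1}$ by Cauchy--Schwarz, $\lesssim |K_m|^{1/2}\|a\|_{l^2}\lesssim (M/N_2)^{1/2}N_2^{r/2}\|a\|_{l^2}$, and interpolates this with \eqref{i1} applied at some $p_1\in(p_0,p)$; since (i) already holds down to $p_0$, this yields a positive $\delta_0(p)$ for every $p>p_0$. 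You should replace your decoupling step by this interpolation; your argument is a valid alternative only in the regime $p>\frac{2(r+2)}{r}$.
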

\begin{proof}
\eqref{i1} is the same as \eqref{rse} of Conjecture \ref{rs}. We first establish \eqref{i1} for all $p>\frac{2(r+2)}2$. 
Denote $x=(x_0,x_1)\in \mathbb{R}^{r_0}\times\mathbb{R}^{r_1}$. Without loss of generality, we may assume ${\bf b}\in\mathbb{Z}^r$. We have 
\begin{align*}
    \left\|
    \sum_{\xi\in {\bf J}_{{\bf b},N}}a_\xi e^{-it|\xi|^2+i\langle x_1,{\xi}_1\rangle}
    \right\|_{L^p_{t,x_1}([0,2\pi]^{1+r_1})}
    &\leq     \left\|
    \sum_{\xi\in {\bf J}_{{\bf b},N}}a_\xi e^{-it|\xi|^2+i\langle x, \xi\rangle}
    \right\|_{L^p_{t,x_1}L^\infty_{x_0}([0,2\pi]^{1+r})}\\
    &\leq     \left\|
    \sum_{\nu\in [0,N]^{1+r}}a_\xi e^{-it|\nu|^2+i\langle x-2t{\bf b}, \nu\rangle}
    \right\|_{L^p_{t,x_1}L^\infty_{x_0}([0,2\pi]^{1+r})},  
    \end{align*}
where we used the change of variables $\nu=\xi-{\bf b}$.    
    Use Bernstein's inequality on $\mathbb{T}^{r_0}$, then 
\begin{align*}
    \left\|
    \sum_{\xi\in {\bf J}_{{\bf b},N}}a_\xi e^{-it|\xi|^2+i\langle x_1,{\xi}_1\rangle}
    \right\|_{L^p_{t,x_1}([0,2\pi]^{1+r_1})}
    &\lesssim N^{\frac{r_0}{p}} \left\|
    \sum_{\nu\in [0,N]^{1+r}}a_\xi e^{-it|\nu|^2+i\langle x-2t{\bf b}, \nu\rangle}
    \right\|_{L^p_{t,x}([0,2\pi]^{1+r})}.
\end{align*}
The right hand side of the above is the same as 
$$N^{\frac{r_0}{p}} \left\|
    \sum_{\nu\in [0,N]^{1+r}}a_\xi e^{-it|\nu|^2+i\langle x, \nu\rangle}
    \right\|_{L^p_{t,x}([0,2\pi]^{1+r})},$$
which is bounded by $N^{\frac{r_0}{p}}\cdot N^{\frac{r}2-\frac{r+2}{p}}\|a_\xi\|_{l^2({\bf J}_{{\bf b},N})}=N^{\frac{r}2-\frac{r_1+2}{p}}\|a_\xi\|_{l^2({\bf J}_{{\bf b},N})}$ for all $p>\frac{2(r+2)}{r}$, using the Strichartz estimate for the Schr\"odinger equation on tori \cite{BD15}. This finishes the proof of \eqref{i1} for $p>\frac{2(r+2)}{r}$; the other case for $r=r_0\geq 2$ is a consequence of Lemma \ref{exp} which we prove later. Next we have 
\begin{align*}
\left\|
\sum_{\xi\in K_m} e^{-it|\xi|^2+i\langle x_1, \xi_1\rangle}a_{\xi}\right\|_{L^{\infty} _{t,x_1}}
\lesssim |K_m|^{\frac12}\|a_{\xi}\|_{l^2_{\xi}}\lesssim M^{\frac12}N_2^{\frac{r-1}{2}}\|a_{\xi}\|_{l^2_{\xi}},
\end{align*}
where $M=\max\{N_2^2/N_1,1\}$. 
Recall that $K_m$ is a subset of a cube ${\bf J}$ of side length $N_2$, so we can interpolate the above bound and \eqref{i1} with $N=N_2$, which yields \eqref{i2} for all $p>p_0$. 

\end{proof}

\section{Proof of Theorem~\ref{lwp}}\label{wellposed}

Checking the exponents of the spectral parameters $N_j$ in the multi-linear Strichartz estimates of Theorem~\ref{mls} and keeping in mind the order relation 
$N_1\geq \cdots\geq N_{k+1}\geq 1$, we see that: 
\begin{itemize}
    \item  For all the cases in (1) of Theorem~\ref{lwp}, the multi-linear Strichartz estimates in Theorem~\ref{mls} imply the following one with uniform exponents: there exists $\delta>0$, such that 
\begin{align*}
\left\|\prod_{j=1}^{k+1}e^{it\Delta}f^j\right\|_{L^2(I\times M)}
&\lesssim 
\left(\frac{N_{k+1}}{N_1}+\frac1{N_2}\right)^{\delta}
\prod_{j=2}^{k+1}
N_j^{\frac{d}{2}-\frac{1}{k}}\prod_{j=1}^{k+1}\|f^j\|_{L^2(M)}.
\end{align*}
The above also holds for the extra cases $k=1$, $r\geq 3$, $r_2=r_3=0$, which however were already treated in \cite{Zha21}. 
Using the now standard theory of $U^p$ and $V^p$ spaces introduced by Koch--Tataru \cite{KT05}, the above multi-linear Strichartz estimate implies local well-posedness of the NLS for initial data in $H^{s}(M)$, for all $s\geq s_c=\frac{d}{2}-\frac1{k}$. We refer to the proof of Theorem~1.1 of \cite{HS15} for a detailed derivation, and how it does not depend on the specifics of the underlying manifold $M$. 

\item For all the cases in (2) of Theorem~\ref{lwp}, the multi-linear Strichartz estimates in Theorem~\ref{mls} implies 
\begin{align*}
\left\|\prod_{j=1}^{k+1}e^{it\Delta}f^j\right\|_{L^2(I\times M)}
&\lesssim 
\prod_{j=2}^{k+1}
N_j^{\frac{d}{2}-\frac{1}{k}+\varepsilon}\prod_{j=1}^{k+1}\|f^j\|_{L^2(M)}
\end{align*}
for all $\varepsilon>0$. 
Using the now standard theory of Fourier restriction spaces introduced by Bourgain \cite{Bou93}, the above multi-linear Strichartz estimate implies local well-posedness of the NLS for initial data in $H^{s}(M)$, for all $s> s_c=\frac{d}{2}-\frac1{k}$. We refer to the proof of Theorem~3 of \cite{BGT05} for a detailed derivation, and how it does not depend on the specifics of the underlying manifold $M$. The above estimate also holds for the extra cases: (1) $r_2=0$, $k=1$, which was already treated in \cite{Zha21}; 
(2) $r_2=2$, $r\geq 4$, $k= 2$, 
and $r_2=3$, $k= 4$, but for these cases almost critical local well-posedness also follows from the approach by linear Strichartz estimates---see Theorem~\ref{llwp}.


\item For all the cases in (3) of Theorem~\ref{lwp}, the multi-linear Strichartz estimates in Theorem~\ref{mls} implies 
\begin{align*}
\left\|\prod_{j=1}^{k+1}e^{it\Delta}f^j\right\|_{L^2(I\times M)}
&\lesssim 
\prod_{j=2}^{k+1}
N_j^{s_0+\varepsilon}\prod_{j=1}^{k+1}\|f^j\|_{L^2(M)}
\end{align*}
for all $\varepsilon>0$. Again using the Fourier restriction spaces of Bourgain, the above multi-linear Strichartz estimate implies local well-posedness of the NLS for initial data in $H^{s}(M)$, for all $s> s_0$. The above estimate also holds for the extra cases: 
$r_2=2,3$, $k=1$, $s_0=\frac{d}2-1+\frac{r_2}4$, and $r_2=3$, $k= 2,3$, $s_0=\frac{d}2-\frac{1}4$, but for these cases the corresponding sub-critical ($s>s_0$) local well-posedness again also follows from the approach by linear Strichartz estimates---see Theorem~\ref{llwp}.   

\end{itemize}

\section{Proof of Lemma \ref{exp}}\label{expsum}
The $r=1$ case of Lemma \ref{exp} 
follows from Lemma 3.1 of \cite{Her13}, which is a generalization of Bourgain's result of the ${\bf b}=0$ case \cite{Bou89}. For $r\geq 2$,
the goal is to prove that for all $p>2$, it holds 
\begin{align*}
    \left\|
    \sum_{\xi\in {\bf J}_{{\bf b},N}}a_\xi e^{-it|\xi|^2}
    \right\|_{L^p_{t}([0,2\pi])}\lesssim N^{\frac{r}{2}-\frac{2}{p}}\|a_\xi\|_{l^2(\mathbb{Z}^r)}.
\end{align*}
We first establish that for all $\varepsilon>0$, 
\begin{align}\label{2loss}
    \left\|
    \sum_{\xi\in {\bf J}_{{\bf b},N}}a_\xi e^{-it|\xi|^2}
    \right\|_{L^2_{t}([0,2\pi])}\lesssim N^{\frac{r}{2}-1+\varepsilon}\|a_\xi\|_{l^2(\mathbb{Z}^r)}.
\end{align}
We have 
\begin{align}\label{counting}
    &\left\|
    \sum_{\xi\in {\bf J}_{{\bf b},N}}a_\xi e^{-it|\xi|^2}
    \right\|_{L^2_{t}([0,2\pi])} 
    = \left(\sum_{A}\left|\sum_{|\xi|^2=A, \ \xi\in {\bf J}_{{\bf b},N}}a_{\xi}\right|^{2}\right)^{\frac12} \nonumber \\
    &\lesssim \sup_A\#\{\xi=(n_1,\ldots,n_r)\in {\bf J}_{{\bf b},N}: |\xi|^2=A\}^{\frac12}
  \|a_\xi\|_{l^2(\mathbb{Z}^r)} \nonumber \\
   &\lesssim N^{\frac{r}{2}-1}\sup_{A'}\#\{(n_1,n_2)\in[b_1,b_1+N]\times[b_2,b_2+N] \cap \mathbb{Z}^2: n_1^2+n_2^2=A'\}^{\frac12}
   \|a_\xi\|_{l^2(\mathbb{Z}^r)}.
\end{align}
Let $S$ denote the set $\{(n_1,n_2)\in[b_1,b_1+N]\times[b_2,b_2+N] \cap \mathbb{Z}^2: n_1^2+n_2^2=A'\}$. It suffices to show $\#S\lesssim N^\varepsilon$. 
For $A'\leq N^{8}$, this is a consequence of the standard divisor bound. 
For $A'>N^{8}$, we can use geometry to conclude that $S$ has at most 2 elements as follows. We are counting the number of lattice points on a circle of radius $\sqrt{A'}>N^4$ inside a square of side length $N$. Suppose there are two distinct points $A,B$ in $S$. Let $l$ denote the line containing $A$ and $B$. Suppose $C$ is another point in $S$. First, as the circle is curved, $C$ cannot be in $l$, and thus the distance $d(C,l)$ between $C$ and $l$ satisfies $d(C,l)\gtrsim N^{-1}$ as $A,B,C$ are all lattice points lying in a square of side length $N$. This further implies that the inscribed angle $\angle{CAB}\gtrsim N^{-2}$. But we also have $\angle{CAB}=|\overset{\frown}{CB}|/(2\sqrt{A'})\lesssim N/N^4=N^{-3}$, which yields a contradiction. We have finished the proof of \eqref{2loss}.

Now we follow 
Herr's proof of the $r=1$ case in \cite{Her13}, as well as Bourgain's original argument in \cite{Bou89}, and also Section 2 of \cite{Dem25}. Let $p>2$ and $\varepsilon>0$. For the sake of exposition and without loss of generality, we may assume ${\bf b}=(b_1,\ldots,b_r)\in\mathbb{Z}^n$ and $N\in\mathbb{Z}_{>0}$. 
Pick a sequence $\sigma$ satisfying:

(i) For all $n \in \mathbb{Z}$, $0\leq \sigma(n)\leq 1$. For all $n\in[0,N]$, $\sigma(n)=1$. For all $n$ such that $n<-N$ or $n>2N$, $\sigma(n)=0$.

(ii) The sequence $\sigma(n+1)-\sigma(n)$ is bounded by $N^{-1}$ and has variation bounded
by  $N^{-1}$. 

\noindent Then define $\sigma_{b_i}(n):=\sigma(n-b_i)$, $i=1,\ldots,r$. Let $\sigma_{\bf b}(\xi)=\sigma_{\bf b}(n_1,\ldots,n_r):=\prod_{i=1}^r \sigma_{b_i}(n_i)$. Fix $\varepsilon>0$. 
Assume $\|a_\xi\|_{l^2(\mathbb{Z}^r)}=1$. It suffices to prove the distributional inequality 
\begin{align}\label{dis}
\sup_{{\bf b}}\left|\left\{t\in[0,1]: \left|\sum_{\xi\in\mathbb{Z}^r}\sigma_{\bf b}(\xi)a_\xi e^{-2\pi it|\xi|^2}\right|>\delta N^{\frac r2}\right\}\right|\lesssim N^{-2}\delta^{-2-\varepsilon}
\end{align}
for all $0<\delta<1$. Note that here we scaled the time interval from $[0,2\pi]$ to $[0,1]$ for the sake of exposition. 

It follows from \eqref{2loss} that 
\begin{align*}
\sup_{{\bf b}}\left|\left\{t\in[0,1]: \left|\sum_{\xi\in\mathbb{Z}^r}\sigma_{\bf b}(\xi)a_\xi e^{-2\pi it|\xi|^2}\right|>\delta N^{\frac r2}\right\}\right|\lesssim N^{-2+\frac{\varepsilon}{100}}\delta^{-2}. 
\end{align*}
Thus it suffices to prove \eqref{dis} for $\delta>N^{-\frac1{100}}$. 

Let 
$$f_{b_i}(t):=\sum_{n\in\mathbb{Z}} \sigma^2_{b_i}(n) e^{2\pi itn^2}.$$ 
Note that the sequence $\sigma^2$ still satisfies the above two conditions (i) and (ii). Then Weyl differencing provides
\begin{align}\label{major}
    \left| f_{b_i}(t)\right|\lesssim q^{-1/2}(|t-a/q|+N^{-2})^{-1/2},
\end{align}
for any $1\leq a\leq q<N$, gcd($a,q$)=1 and $|t-a/q|<(qN)^{-1}$, uniformly in $b_i\in\mathbb{Z}$. The above is (32) of \cite{Her13}.
We define the major arcs $\mathcal{M}$ to be the disjoint union of the sets 
$\mathcal{M}(a,q)=\{t\in[0,1]: |t-a/q|\leq N^{\frac1{10}-2}\}$, for any $1\leq a\leq q\leq N^\frac1{10}$, gcd($a,q$)=1. If $t\in[0,1]\setminus \mathcal{M}$, then an application of Dirichlet's approximation theorem gives
\begin{align}\label{minor}
    \left| f_{b_i}(t)\right|\lesssim N^{1-\frac1{20}}.
\end{align}
The above is (33) of \cite{Her13}. 

To prove \eqref{dis}, it suffices to bound the number $R$ of $N^{-2}$ separated points $t_1,\ldots,t_R\in[0,1]$ such that 
$$\left|\sum_{\xi\in \mathbb{Z}^r}\sigma_{\bf b}(\xi)a_\xi e^{-2\pi it_r|\xi|^2}\right|>\delta N^{\frac r2}, \text{ for all }r=1,\ldots,R.$$
Let $|c_r|=1$ such that 
$$\left|\sum_{\xi\in \mathbb{Z}^r}\sigma_{\bf b}(\xi)a_\xi e^{-2\pi it_r|\xi|^2}\right|=c_r \sum_{\xi\in \mathbb{Z}^r}\sigma_{\bf b}(\xi)a_\xi e^{-2\pi it_r|\xi|^2}. $$
Using the Cauchy--Schwarz and triangle inequalities we find 
\begin{align*}
    R\delta N^{\frac r2}
   & \leq \sum_{r=1}^R c_r \sum_{\xi\in \mathbb{Z}^r}\sigma_{\bf b}(\xi)a_\xi e^{-2\pi it_r|\xi|^2}=\sum_{\xi\in \mathbb{Z}^r}a_\xi \sum_{r=1}^R c_r  \sigma_{\bf b}(\xi)e^{-2\pi it_r|\xi|^2}\\
   &\leq \left(\sum_{\xi\in \mathbb{Z}^r}\left|\sum_{r=1}^R c_r  \sigma_{\bf b}(\xi)e^{-2\pi it_r|\xi|^2} \right|^2\right)^{\frac12}\\
   &\leq\left(\sum_{1\leq r,r'\leq R} \left| \sum_{\xi\in \mathbb{Z}^r}\sigma_{\bf b}^2(\xi)e^{2\pi i(t_r-t_{r'})|\xi|^2} \right|\right)^{\frac12}
   =\left(\sum_{1\leq r,r'\leq R} \left| 
   \prod_{i=1}^r f_{b_i}(t_r-t_{r'})\right|\right)^{\frac12}.
\end{align*}
Let $\gamma\geq 1$ be fixed such that $r\gamma>2$. That is, for $r=2$, we can pick any $\gamma>1$, while for $r\geq 3$, we may put $\gamma=1$. Then the above estimate combined with H\"older's inequality yields 
\begin{align}\label{meat}
\sum_{1\leq r,r'\leq R} \left| 
   \prod_{i=1}^r f_{b_i}(t_r-t_{r'})\right|^\gamma\gtrsim R^{2}\delta^{2\gamma} N^{r\gamma}.
\end{align}
Define  
$F(\theta)=(N^2|\sin\theta|+1)^{-r\gamma/2}$,
and let 
$$G(t)=\sum_{q\leq Q,\ 1\leq a\leq q, \ (a,q)=1}q^{-r\gamma/2}F(t-a/q),$$
which will provide an upper bound for \eqref{meat}. The condition $r\gamma>2$ makes sure that 
$\|F\|_{L^1([0,2\pi])}\lesssim N^{-2}$. Here we pick $Q=C\delta^{-5}\leq C N^{\frac1{20}}$, where $C$ is a large but absolute constant independent of $N$ and $\delta$. This choice of $Q$ makes sure that:
\begin{itemize}
    \item The involved fractions $a/q$ in the above sum belong to the major arcs $\mathcal{M}$ defined above, so that when $t_r-t_{r'}$ lies in one of these major arcs, we can use \eqref{major} to bound the left hand side of \eqref{meat};
    \item When $t_r-t_{r'}$ lies in some $\mathcal{M}(a,q)\subset\mathcal{M}$ but with $q>Q$, then the \eqref{major} is still applicable so that  $\sum_{r,r'}\prod_{i=1}^r|f_{b_i}(t_r-t_{r'})|^\gamma\lesssim R^2Q^{-r\gamma/2}N^{r\gamma}=  C^{-r\gamma/2}R^2\delta^{5r\gamma/2}N^{r\gamma}$, which is negligible when compared with the right hand side of \eqref{meat};
    \item When $t_r-t_{r'}$ lies outside of $\mathcal{M}$, then \eqref{minor} implies that 
    $\sum_{r,r'}\prod_{i=1}^r|f_{b_i}(t_r-t_{r'})|^\gamma\lesssim R^2N^{r\gamma(1-1/20)}$, and since $\delta>N^{-1/100}$, this is also negligible when compared with the right hand side of \eqref{meat}.
\end{itemize}
The above points together yield
$$\sum_{1\leq r,r'\leq R}G(t_r-t_{r'})\gtrsim R^2\delta^{2\gamma}.$$
Following the remaining arguments in \cite[pp.~306-307]{Bou89} verbatim, we arrive at $R\lesssim \delta^{-2-\varepsilon}$, which implies \eqref{dis}. 

\begin{rem}\label{remcounting}
For all $r_0\geq 2$, a counting estimate similar to \eqref{counting} also works, which yields \eqref{rse} with an $N^\varepsilon$ loss. It is possible to remove this loss following a similar approach as in the above proof of Lemma \ref{exp}, but we would not need such a result and so we leave the details to the interested reader. The hardest case for Conjecture \eqref{rs} is when $r_0=1$. 
\end{rem}



\section*{Appendix: Linear Strichartz estimates and consequences for local well-posedness}
We review the known linear Strichartz estimates on products of spheres and tori and their consequences for local well-posedness.  Let 
$$\delta(p,d):=\left\{
\begin{array}{ll}
    \frac{d-1}{2}-\frac{d}{p}, &\frac{2(d+1)}{d-1}\leq p\leq\infty, \\
     \frac{d-1}{2}(\frac12-\frac1p),& 2\leq p\leq   \frac{2(d+1)}{d-1}.
\end{array}
\right.$$
Let $M=\mathbb{S}^{d_1}\times\mathbb{S}^{d_2}\times\cdots\times \mathbb{S}^{d_{r_0}}\times \mathbb{T}^{r_1}$ with $r=r_0+r_1\geq 2$. Denote 
$$d'=\min\{d_i: i=1,\ldots,r_0, \ d_i\neq 2\},$$
with the understanding that $\min\emptyset=\infty$.

\begin{thm}
Let $I$ be a finite time interval. Then the Strichartz estimate
\begin{align}\label{strichartz}
\|e^{it\Delta} f\|_{L^{p}(I, L^q(M))}\lesssim \|f\|_{H^s(M)},
\end{align}
holds for the following cases:\\
(1) $q=2$, $s\geq 0$. \\
(2) $p\geq 2$, $q<\infty$, $\frac2p+\frac dq=\frac d2$, $s\geq \frac1p$.\\
(3) $p=q\geq 2$, $s\geq \gamma(p)+\sum_{i=1}^{r_0}\delta(p,d_i)$, 
where $\gamma(p)$ is any exponent for which it holds  
\begin{align}\label{restrictedStrichartz}
\left\|\sum_{\substack{\xi=(\xi_0,\xi_1)\in\mathbb{Z}^{r_0}_{\geq 0}\times\mathbb{Z}^{r_1}
\\ |\xi|\leq N
}}a_\xi e^{-it|\xi|^2+i\langle x_1,\xi_1\rangle}\right\|_{L^p_{t,x_1}([0,2\pi]^{1+r_1})}\lesssim N^{\gamma(p)}\|a_\xi\|_{l^2(\mathbb{Z}^r)}.
\end{align}
In particular, due to Lemma \ref{exp}, (i) of Lemma \ref{e1}, and Remark \ref{remcounting}, we have 
$$\gamma(p)=\left\{
\begin{array}{ll}
    \frac r2-\frac{2}{p}, &\text{ if } p>2,\ r_1=0, \\
      \frac{r}2-\frac{2+r_1}{p}, &\text{ if }p>\frac{2(r+2)}{r}, \\
      \frac{r}2-\frac{2+r_1}{p}+\varepsilon, &\text{ if }p\geq 2,\ r_0\geq 2,\\ 
  0 & \text{ if }p=2,\ r_0=0,1, 
\end{array}
\right.$$
where $\varepsilon$ can be any positive number. \\
(4) $p=q\geq 2+\frac 8r$, $s\geq \frac{d}2-\frac{d+2}p$.\\
(5) $p=q\geq 2+\frac {4(d'+1)}{d'r}$, $s\geq \frac{d}2-\frac{d+2}p$, provided $r_1=0$ and $d_i$ is odd for all $i=1,\ldots,r_0$. 
\end{thm}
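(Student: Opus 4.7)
Parts (1) and (2) are immediate. For (1), $L^2$-conservation of the linear propagator gives $\|e^{it\Delta}f\|_{L^p(I,L^2)}\leq |I|^{1/p}\|f\|_{L^2}$, and $H^s\hookrightarrow L^2$ for $s\geq 0$. For (2), this is the Burq--G\'erard--Tzvetkov Strichartz estimate on an arbitrary compact Riemannian manifold from \cite{BGT04}, proved via the H\"ormander parametrix for the half-wave operator together with a $TT^*$ argument; it uses no structure of $M$ beyond compactness.

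For part (3), the plan is to reduce via Littlewood--Paley and the $L^p$-boundedness of spectral multipliers on compact manifolds (also from \cite{BGT04}) to the single-frequency bound $\|e^{it\Delta}f\|_{L^p([0,2\pi]\times M)}\lesssim N^{\gamma(p)+\sum_{i=1}^{r_0}\delta(p,d_i)}\|f\|_{L^2}$ for $f$ spectrally localized to $|\xi|\sim N$. Expanding $f$ in the joint eigenbasis as in \eqref{sd} gives
\[
e^{it\Delta}f(x_0,x_1)=\sum_{\xi}e^{-it|\xi|^2+i\langle x_1,\xi_1\rangle}f_\xi(x_0),
\]
which, by Fubini, turns $\|e^{it\Delta}f\|_{L^p_{t,x}}$ into the mixed norm $\bigl\|\|S(\cdot,\cdot,x_0)\|_{L^p_{t,x_1}}\bigr\|_{L^p_{x_0}(M_0)}$, where $S$ is an exponential sum in $(t,x_1)$ with coefficients $f_\xi(x_0)$. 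The assumed bound \eqref{restrictedStrichartz}, applied pointwise in $x_0$, yields $\|S(\cdot,\cdot,x_0)\|_{L^p_{t,x_1}}\lesssim N^{\gamma(p)}\|f_\xi(x_0)\|_{\ell^2_\xi}$. Since $p\geq 2$, Minkowski's inequality then pushes $L^p_{x_0}$ inside $\ell^2_\xi$, after which a tensorized application of Sogge's $L^p$-estimate for spectral clusters on each $\mathbb{S}^{d_i}$ (valid because each $n_i\leq N$) produces the $N^{\sum_i\delta(p,d_i)}$ factor.

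Parts (4) and (5) target the scale-critical loss $s=d/2-(d+2)/p$. The plan is again a single-frequency reduction, but now applying Bourgain--Demeter $\ell^2$-decoupling \cite{BD15} to the $r$-dimensional joint spectrum $\lambda\in\mathbb{Z}^r_{\geq 1}$; this is legitimate because assumptions (A1)-(A2) make each sphere factor structurally resemble a torus direction at the level of the phase $|\lambda|^2$, up to the harmless lower-order shift in $n(n+b)$. Decoupling on the paraboloid in $\mathbb{R}^{r+1}$ gives the torus-type loss $N^{r/2-(r+2)/p+\varepsilon}$ for $p\geq 2(r+2)/r$; combining it with Sogge's $L^p$-estimate on each sphere factor in its subcritical regime $\delta(p,d_i)=\frac{d_i-1}{2}(\frac12-\frac1p)$ produces exactly the scale-critical loss $d/2-(d+2)/p$ once $p$ is large enough to render the sphere exponent subcritical---the computation yielding precisely $p\geq 2+8/r$. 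Part (5) follows the same scheme, but uses that for odd-dimensional spheres the half-wave evolution is periodic (up to phases) and the eigenvalues fit cleanly into the decoupling framework without a separate Sogge step, which lowers the threshold to $p\geq 2+\frac{4(d'+1)}{d'r}$.

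The main obstacle is not a single step but the careful bookkeeping needed to match decoupling thresholds against Sogge's subcritical-to-critical transitions and to dyadically sum the Littlewood--Paley pieces at the scale-critical exponent without an $N^\varepsilon$ loss. Fortunately this is a well-traveled road, and the required Littlewood--Paley summation at equality in $s$ can be handled by standard endpoint arguments following \cite{HTT11} and \cite{HS15}.
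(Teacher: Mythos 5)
Parts (1)--(3) of your proposal are correct and follow the same route as the paper: (1) is trivial, (2) is quoted from \cite{BGT04}, and for (3) the paper does exactly what you describe --- freeze $x_0$, apply \eqref{restrictedStrichartz} with $a_\xi=f_\xi(x_0)$, use Minkowski (valid since $p\geq 2$) to pass $L^p_{x_0}$ inside $\ell^2_\xi$, and then invoke the tensorized Sogge bound $\|f_\xi\|_{L^p(M_0)}\lesssim \prod_i(n_i+1)^{\delta(p,d_i)}\|f_\xi\|_{L^2(M_0)}$, itself obtained factor by factor via Minkowski. The Littlewood--Paley reduction at equality $s=\gamma(p)+\sum_i\delta(p,d_i)$ is standard for $2\leq p<\infty$, as you say.

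For parts (4) and (5) there is a genuine gap. First, note that the paper does not prove these at all: it cites \cite{Zha21} for (4) and \cite{Zha20} for (5), where they are established by a different method (analysis of the Schr\"odinger kernel on compact symmetric spaces, not by decoupling-plus-Sogge). Your sketch, taken on its own terms, does not produce the stated thresholds. (a) The decoupling input available here is the restricted sum in the variables $(t,x_1)$ only, whose sharp loss is $N^{r/2-(2+r_1)/p}$ (Lemma \ref{e1}(i)), not the full torus loss $N^{r/2-(r+2)/p}$ you quote; the difference is the Bernstein factor $N^{r_0/p}$ from the frozen sphere directions. (b) Combining the exponential-sum bound with \emph{subcritical} Sogge exponents requires $p\leq\min_i\frac{2(d_i+1)}{d_i-1}$, which is an \emph{upper} bound on $p$ and is violated for $p\geq 2+8/r$ whenever some $d_i$ is large; and if one instead uses the supercritical Sogge exponents $\frac{d_i-1}{2}-\frac{d_i}{p}$ together with the correct $\gamma(p)=\frac r2-\frac{2+r_1}{p}$, the total loss is exactly $\frac d2-\frac{d+2}{p}$ but under the condition $p\geq\max_i\frac{2(d_i+1)}{d_i-1}$ (and $p$ above the exponential-sum threshold) --- a condition that is neither $2+\frac 8r$ nor $2+\frac{4(d'+1)}{d'r}$. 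In no regime does your bookkeeping ``yield precisely $p\geq 2+8/r$.'' (c) The assertion that decoupling can be applied directly to the $r$-dimensional joint spectrum because (A1)--(A2) make the sphere factors ``resemble torus directions'' is not an argument: decoupling controls exponential sums with scalar coefficients, and the whole difficulty on the sphere factors is converting products of joint eigenfunctions into $L^p$ norms, which is precisely the Sogge/joint-spectral-projector step you propose to skip in (5). If you want to include (4) and (5), either cite \cite{Zha21} and \cite{Zha20} as the paper does, or supply the actual kernel-based arguments from those references.
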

\begin{proof}
(1) is trivial. (2) is the Strichartz estimate of \cite{BGT04} which is true on any compact manifold. 
Now we explain (3). Let $M_0:=\mathbb{S}^{d_1}\times\cdots\times \mathbb{S}^{d_{r_0}}$. For a joint eigenfunction $g\in L^2(M_0)$ of the Laplace--Beltrami operators $\Delta_i$ on $\mathbb{S}^{d_i}$ ($i=1,\ldots,r_0$) of eigenvalue $-n_i^2$ ($n_i\geq0$) respectively, we have 
$$\|g\|_{L^p(M_0)}\lesssim 
\prod_{i=1}^{r_0} (n_i+1)^{\delta(p,d_i)}
\|g\|_{L^2(M_0)}.$$
This follows by applying Sogge's spectral projector bounds \cite{Sog88} to each factor $\mathbb{S}^{d_i}$ and using Minkowski's inequality, just as how we derived \eqref{mpmi}. Now for $f\in L^2(M)$, by writing $e^{it\Delta}f$ as in \eqref{linearsum}, we can first freeze the variable $x_0\in M_0$ and apply \eqref{restrictedStrichartz}, and then apply Minkowski's inequality again and the above joint eigenfunction bound. This gives (3). (4) is a result of \cite{Zha21} which holds on general compact symmetric spaces of compact type as well as on their products with tori. (5) is a result of \cite{Zha20}. 
\end{proof}

Linear Strichartz estimates are able to provide local well-posedness via:
\begin{prop}\label{StoL}
Let $M$ be a compact manifold of dimension $d$. Consider the \eqref{NLS} posed on $M$ with $k\geq 1$. Let $(p,q,s)=(p_0,q_0,s_0)$ be a triple so that the Strichartz estimate \eqref{strichartz} holds. Suppose that $p_0>2k$. Then the \eqref{NLS} is locally well-posed in $H^s$ for all $s>s_0+\frac d{q_0}$, with the solution space as $C([-T,T],H^s(M))\cap L^{p_0}([-T,T],L^\infty(M))$, $T>0$. 
\end{prop}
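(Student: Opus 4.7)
The approach is a standard Banach fixed-point argument applied to the Duhamel reformulation
\[ u(t) = e^{it\Delta}u_0 \mp i\int_0^t e^{i(t-s)\Delta}\bigl(|u|^{2k}u\bigr)(s)\,ds \]
in the complete metric space $X_T := C([-T,T],H^s(M)) \cap L^{p_0}([-T,T],L^\infty(M))$ equipped with its natural norm. The whole argument rests on three ingredients: a linear estimate in $X_T$, a pointwise nonlinear estimate in $H^s$, and a Duhamel bound for the inhomogeneous equation.

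For the linear estimate $\|e^{it\Delta}u_0\|_{X_T} \lesssim \|u_0\|_{H^s}$, the $L^\infty_tH^s$ component is the unitarity of $e^{it\Delta}$ on $H^s$. For the $L^{p_0}_tL^\infty$ component, apply the hypothesized Strichartz estimate \eqref{strichartz} to $(1-\Delta)^{(s-s_0)/2}u_0$ to obtain $\|e^{it\Delta}u_0\|_{L^{p_0}_tW^{s-s_0,q_0}_x} \lesssim \|u_0\|_{H^s}$, and then invoke the Sobolev embedding $W^{s-s_0,q_0}(M)\hookrightarrow L^\infty(M)$, which is valid precisely because $s-s_0>d/q_0$.

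The key nonlinear estimate is
\[ \bigl\||u|^{2k}u\bigr\|_{L^1_tH^s_x} \lesssim T^{1-2k/p_0}\,\|u\|_{L^{p_0}_tL^\infty_x}^{2k}\|u\|_{L^\infty_tH^s_x} \lesssim T^{1-2k/p_0}\,\|u\|_{X_T}^{2k+1}. \]
The fractional Leibniz/Moser bound $\||u|^{2k}u\|_{H^s} \lesssim \|u\|_{L^\infty}^{2k}\|u\|_{H^s}$ gives the first inequality pointwise in time; Hölder in time with exponents $p_0/(2k)$ and $p_0/(p_0-2k)$ produces the positive power of $T$, and this is exactly where the assumption $p_0 > 2k$ enters essentially. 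For the retarded Duhamel operator $v(t)=\int_0^t e^{i(t-s)\Delta}F(s)\,ds$, unitarity directly gives $\|v\|_{L^\infty_tH^s} \leq \|F\|_{L^1_tH^s}$, while Minkowski in the inner integral combined with the linear estimate above gives the analogous $L^{p_0}_tL^\infty$ bound.

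Combining these estimates, the Duhamel map sends the closed ball of radius $R = 2C\|u_0\|_{H^s}$ in $X_T$ into itself and is a strict contraction once $T$ is chosen small in terms of $\|u_0\|_{H^s}$, producing a unique fixed point, and an identical difference estimate yields continuous dependence on the data. The only technical subtlety is the fractional product estimate in $H^s(M)$ for noninteger $s$ on a general compact manifold, but this is by now standard, following from Littlewood--Paley decomposition based on the spectral resolution of $-\Delta$ together with Mikhlin-type multiplier theorems; the rest is bookkeeping that does not depend on the specific geometry of $M$.
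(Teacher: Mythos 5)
Your argument is correct and is essentially the paper's proof: the paper simply defers to Proposition 3.1 of Burq--G\'erard--Tzvetkov \cite{BGT04}, whose proof is exactly the contraction argument you spell out (Duhamel fixed point in $C_TH^s\cap L^{p_0}_TL^\infty$, linear Strichartz plus the Sobolev embedding $W^{s-s_0,q_0}\hookrightarrow L^\infty$ from $s-s_0>d/q_0$, the Moser bound $\||u|^{2k}u\|_{H^s}\lesssim\|u\|_{L^\infty}^{2k}\|u\|_{H^s}$, and H\"older in time using $p_0>2k$ to gain a positive power of $T$). No gaps; the only adaptation from \cite{BGT04} is tracking the general exponents $(p_0,q_0,s_0)$ and the power $2k$, which you have done correctly.
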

\begin{proof}
    A word-by-word modification of the proof of Proposition 3.1 in \cite{BGT04}. 
\end{proof}

\begin{thm}\label{llwp}
Let $M=\mathbb{S}^{d_1}\times\cdots\times\mathbb{S}^{d_{r_0}}\times\mathbb{T}^{r_1}$ with $r=r_0+r_1\geq 2$. Let $r_2$ denote the number of 2-sphere factors in the product. 
Then the \eqref{NLS} posed on $M$ with $k\geq 1$ is locally well-posed at regularity $H^s(M)$ for the following cases.

\begin{enumerate}
    \item $k=1$, $r_2\geq 1$, $r=2,3,4$: $s>\frac d2-\frac12$. 

    \item $k=1$, $r_2\geq 1$, $r\geq 5$: $s>\frac d2-\frac{r}{r+4}$.

    \item $k=1$, $r_2=0$, $r_0\geq 2$: $s>\frac d2-\frac{2}{p_0}$, where $p_0=\min\left\{\frac{2(d'+1)}{d'-1}, 2+\frac8r\right\}$. If $d_i$ is odd for all $i=1,\ldots,r$ and $r_1=0$, then we can upgrade it to 
$p_0=\min\left\{\frac{2(d'+1)}{d'-1}, 2+\frac{4(d'+1)}{d'r}\right\}$.

    \item $k=1$, $r_2=0$, $r_0=1$: $s>\frac d2-\frac{2}{p_0}$, where $p_0=\min\left\{\max\{\frac{2(r+2)}{r}, \frac{2(d'+1)}{d'-1}\}, 2+\frac8r\right\}$. If $d_i$ is odd for all $i=1,\ldots,r$ and $r_1=0$, then we can upgrade it to 
$p_0=\min\left\{\max\{\frac{2(r+2)}{r}, \frac{2(d'+1)}{d'-1}\}, 2+\frac{4(d'+1)}{d'r}\right\}$.


    \item $k=2$, $r\geq 4$:  $s>s_c=\frac d2-\frac12$. 
    \item $k=2$, $r=3$, $r_2=2,3$:  $s>\frac d2-\frac37$. 
    \item $k=2$, $M=\mathbb{S}^2\times\mathbb{S}^2$:  $s>\frac d2-\frac13$.


    \item $k=2$, $r_2= 1$, $r=2$:  $s>\frac d2-\frac{d'}{2(d'+1)}$.

    \item $k=2$, $r_2= 1$, $r=3$, $d'\leq 6$:  $s>\frac d2-\frac37$.

    \item $k=2$, $r_2= 1$, $r=3$, $d'\geq 7$:  $s>\frac d2-\frac{d'}{2(d'+1)}$.

    \item $k=2$, $r_2=0$:  $s>s_c=\frac d2-\frac12$.

    \item $k\geq 3$:  $s>s_c=\frac d2-\frac1k$.

\end{enumerate}







\end{thm}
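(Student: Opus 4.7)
The plan is to deduce each of the twelve cases from Proposition \ref{StoL} by invoking the appropriate linear Strichartz triple $(p_0, q_0, s_0)$ from (1)--(5) of the preceding theorem; Proposition \ref{StoL} then produces local well-posedness in $H^s$ for all $s > s_0 + d/q_0$, provided $p_0 > 2k$. In every row I take $p_0 = q_0$, so the threshold becomes $s_0 + d/p_0$, and I pick $p_0$ so as to minimize this quantity subject to the admissibility of the invoked estimate and to $p_0 > 2k$.

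For the cubic cases ($k=1$) the constraint is merely $p_0 > 2$. Case (1) uses the generic BGT04 estimate (2) with $p_0 \to 2^+$ and $q_0$ defined by $2/p_0 + d/q_0 = d/2$, producing $s > 1/p_0 + d/q_0 = d/2 - 1/p_0 \to d/2 - 1/2$. Case (2) uses the product Strichartz (4) at $p_0 = 2 + 8/r$, yielding $s_0 + d/p_0 = d/2 - 2/p_0 = d/2 - r/(r+4)$. Cases (3) and (4), where no $\mathbb{S}^2$ factor is present, take the better of estimates (3) and (4); the definition of $p_0$ in the statement of Theorem \ref{llwp} records precisely this optimum, with the odd-dimensional refinement from (5) applying when all $d_i$ are odd and $r_1 = 0$, and the extra $\max\{2(r+2)/r,\cdot\}$ in case (4) reflecting the restriction on $\gamma(p)$ when $r_0 = 1$.

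The higher-power cases ($k \geq 2$) activate the binding constraint $p_0 > 2k$. For $k = 2$, cases (5), (6), (7) apply estimate (4) at $p_0$ just above $\max\{4,\,2+8/r\}$, which yields $d/2 - 1/2$, $d/2 - 3/7$, and $d/2 - 1/3$ respectively. Cases (8)--(10), with a single $\mathbb{S}^2$ factor present, instead use estimate (3): one factors the $L^p_{t,x}$ norm through Sogge's spectral projector bound on the $\mathbb{S}^{d'}$ factor and the $\mathbb{S}^2$ factor separately, picking $p_0$ at or just above $2(d'+1)/(d'-1)$ when this beats the product exponent $2+8/r$ and is compatible with $p_0 > 4$; the resulting $s_0 + d/p_0$ equals $d/2 - d'/(2(d'+1))$, and the crossover with case (6) occurs at $d' = 7$ when $r = 3$. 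Case (11) is again BGT04, and case (12), for $k \geq 3$, uses (4) with $p_0$ slightly above $2k \geq 6 \geq 2+8/r$, automatically admissible and producing $s > s_c$.

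The hard part is purely organizational: for each row one must verify that the declared $p_0$ simultaneously (i) exceeds $2k$, (ii) lies in the admissible range of the invoked Strichartz estimate, and (iii) produces exactly the stated exponent $s_0 + d/p_0$. Once these three checks are performed row by row, no further analytic input beyond the Strichartz estimates (1)--(5) and Proposition \ref{StoL} is needed.
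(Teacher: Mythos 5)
Your overall strategy coincides exactly with the paper's (whose entire proof is the single sentence ``optimize $s>s_0+d/q_0$ over the available Strichartz triples, case by case''), and most of your row-by-row attributions check out: cases (1), (2), (5), (6), (7) and (12) are handled correctly, as are (3) and (4) at the level of detail you give. However, two of your explicit derivations do not work as written. First, case (11) cannot come from the BGT04 estimate: that estimate gives the threshold $s_0+d/q_0=\frac d2-\frac1{p_0}$ with the constraint $p_0>2k=4$, hence only $s>\frac d2-\frac14$, which falls short of $s_c=\frac d2-\frac12$. To reach $s_c$ when $r_2=0$ and $r=2,3$ (the cases not already covered by your treatment of case (5)) you must use estimate (3): since every sphere factor has $d_i\ge 3$, all Sogge breakpoints $\frac{2(d_i+1)}{d_i-1}$ are $\le 4$, so for $p_0$ slightly above $4$ one is in the regime $\delta(p,d_i)=\frac{d_i-1}2-\frac{d_i}{p}$, and the total threshold telescopes to $\frac d2-\frac2{p_0}\to\frac d2-\frac12$.

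Second, your recipe for cases (8)--(10) --- ``pick $p_0$ at or just above $\frac{2(d'+1)}{d'-1}$'' --- is incompatible with the constraint $p_0>2k=4$, since $\frac{2(d'+1)}{d'-1}\le 4$ for every $d'\ge 3$; and even setting that aside, no arithmetic is offered showing that the resulting $s_0+d/p_0$ equals $\frac d2-\frac{d'}{2(d'+1)}$ (note that $\frac{d'}{2(d'+1)}$ corresponds to $\frac{2}{p_0}$ at $p_0=\frac{4(d'+1)}{d'}$, not to anything evaluated at the Sogge exponent). A direct computation with estimate (3) for $M=\mathbb{S}^2\times\mathbb{S}^{d'}$ and $4<p_0<6$ gives the threshold $\frac d2-\frac14-\frac1{2p_0}$, which tends to $\frac d2-\frac38$ as $p_0\to 4^+$; this reproduces the stated exponent only when $d'=3$. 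So for these rows your proposal has a genuine gap: you need to exhibit, for each $d'$, a concrete admissible triple with $p_0>4$ whose threshold is $\frac d2-\frac{d'}{2(d'+1)}$, and the choice you name does not do this. Since the paper supplies no computations of its own here, you should carry out and record the optimization for these rows explicitly before the argument can be considered complete.
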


\begin{proof}
For each case, we optimize the local well-posedness range $s>s_0+\frac d{q_0}$ given in Proposition \ref{StoL} among all Strichartz triples $(p_0,q_0,s_0)$ given in Theorem~\ref{strichartz}. The theorem follows by an easy but tedious case-by-case calculation. 
\end{proof}

\begin{rem}
    Optimal linear Strichartz estimates on a compact manifold, even in the $p=q$ case of \eqref{strichartz}, are highly challenging. The only examples of compact manifolds for which optimal linear Strichartz estimates in the $p=q$ case are known are rectangular tori, as established in \cite{BD15,KV16}, and Zoll manifolds, as recently established in \cite{HS25} up to $\varepsilon$-losses.
\end{rem}

\end{document}